\documentclass{amsart}
\usepackage{amsmath,amssymb}
\usepackage{graphicx}
\usepackage{tikz}
\usetikzlibrary{arrows.meta,decorations.markings}
\usepackage{microtype}
\usepackage{xcolor}
\usepackage[hidelinks]{hyperref}
\DeclareTextCommand{\k}{OT1}[1]{\UseTextAccent{T1}{\k}{#1}}

\allowdisplaybreaks[4]

\newtheorem{thm}{Theorem}
\newtheorem{lem}[thm]{Lemma}

\newtheorem{addendum}[thm]{Addendum}
\theoremstyle{definition}
\newtheorem{rmk}[thm]{Remark}

\newcommand{\Z}{\mathbb{Z}}
\newcommand{\CP}{{\mathbb{CP}}{}^{2}}
\newcommand{\CPb}{\overline{\mathbb{CP}}{}^{2}}
\newcommand{\x}{\times}
\newcommand{\F}{\mathbb{F}}
\newcommand{\id}{\mathrm{id}}
\newcommand{\Int}{\mathrm{int}}
\newcommand{\im}{\mathrm{im}}
\newcommand{\Mod}{\operatorname{Mod}}

\begin{document}

\title[Exotic $S^2\x S^2$ candidates that are just fake]
{Exotic $S^2\x S^2$ candidates that are just fake}

\author[R. \.{I}. Baykur]{R. \.{I}nan\c{c} Baykur}
\address{Department of Mathematics and Statistics, University of Massachusetts, Amherst, MA 01003, USA}
\email{inanc.baykur@umass.edu}

\date{}

\begin{abstract}
We show that two infinite families of homology $S^2\x S^2$'s presented in \cite{FPS, AP}, originally proposed as exotic candidates, are aspherical.
In particular, none of these manifolds is an exotic $S^2\x S^2$.
We prove this by reconstructing them from aspherical building blocks,
following Edmonds. The same arguments apply to and rule out further
candidates for simply connected exotica.
We also give counterexamples to the conjecture that Luttinger surgery
preserves symplectic asphericity.
\end{abstract}

\maketitle
\setcounter{secnumdepth}{2}

\section{Introduction}

The search for small exotic $4$--manifolds has led to many
constructions by surgeries along Lagrangian tori in symplectic
$4$--manifolds with fairly simple topology. For some of the smallest
proposed simply connected examples of signature zero, determining
the fundamental group has remained the main difficulty. In this
note, we consider two infinite families of homology $S^2\x S^2$'s
constructed in~\cite{FPS,AP} by Fintushel--Park--Stern and
Akhmedov--Park, respectively. We show that these manifolds are all
aspherical, and in particular, none is an exotic
$S^2\x S^2$.

This work was prompted by Paul Kirk's recent paper~\cite{Kirk},
whose calculations gave us a complete and convincing list of relators together with the peripheral data used in our argument for the torus surgeries of higher multiplicity. Our
arguments for asphericity are inspired by Allan Edmonds' beautiful
article~\cite{Edmonds}, whose ideas we had long wanted to put to use.
Much like in the constructions of exotic candidates we consider,
Edmonds works with products of punctured surfaces and assembles
aspherical $4$--manifolds by gluing along $\pi_1$-injective boundary
submanifolds.

There is also a more practical motivation for this article. The existence
of an exotic $S^2\x S^2$ remains open~\cite[Problem~4.3(b)]{K3}.
Claims and attempts to make the manifolds we consider here, or variations of them, simply
connected, with the hope of obtaining small exotica, have persisted
for many years; see also Remark~\ref{rmk:further}. While nobody wants to be
the bearer of bad news, we feel that explaining why these
constructions cannot work will be helpful, so nobody finds themselves still beating the same drum. We hope that
understanding what not to try will help guide the search for
constructions that do work.

We use \emph{fake $Z$} for a smooth closed
$4$--manifold with the same integral homology and intersection form as
$Z$ but not diffeomorphic to it, and call it an \emph{exotic $Z$} if it is
also homeomorphic to $Z$. We thus show that:

\begin{thm}\label{thm:families}
Every member of the two infinite families of fake
$S^2\x S^2$'s constructed in~\cite{FPS,AP} is aspherical; in particular,
none is homeomorphic to $S^2\x S^2$.
\end{thm}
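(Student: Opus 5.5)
The plan is to follow Edmonds' strategy. I would write each manifold in the two families as a union of aspherical pieces glued along $\pi_1$-injective aspherical boundary components, and then apply the standard gluing principle. That principle (Whitehead's theorem for graphs of aspherical spaces) says: if $X = X_1 \cup_Y X_2$, or $X = X_1 \cup_Y$ (an HNN-type self-gluing), where the $X_i$ and $Y$ are aspherical and $Y \to X_i$ is $\pi_1$-injective, then $X$ is aspherical. Asphericity immediately rules out homeomorphism with $S^2\times S^2$, since $\pi_2(S^2\times S^2)\ne 0$. Equivalently, an aspherical closed $4$-manifold has infinite $\pi_1$, hence is not simply connected.

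The first step is to recall the constructions in \cite{FPS,AP}. Each member starts from a product of surfaces, or from a fiber sum of such products, for instance $\Sigma_2\times\Sigma_2$ or pieces $(\Sigma_g\setminus D)\times(\Sigma_h\setminus D)$. It then performs a collection of Luttinger surgeries, possibly of higher multiplicity $1/n$, along Lagrangian tori of the form $a_i\times b_j$. I would isolate the tori in disjoint product neighborhoods. The key observation is that each surgery torus $T=\alpha\times\beta$ sits inside a sub-block $P\times Q$, where $P$ and $Q$ are surfaces with boundary (pairs of pants, or once-punctured tori, after cutting the surfaces along arcs). The surgered block $(P\times Q)_{T}$ is then a $3$-manifold bundle over a circle, or a graph-manifold-like piece crossed with an interval. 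Concretely, removing $\nu T$ and regluing changes $\alpha\times\beta\times D^2$ by a Dehn twist in the $\beta$-direction, which turns $(\text{annulus}\times\text{punctured surface})$ into the product of an interval with a mapping torus of a punctured surface under a power of a Dehn twist. Such mapping tori of surfaces with nonempty boundary are Haken and aspherical, so each surgered block is aspherical.

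The second step is to check that every gluing interface is aspherical and $\pi_1$-injective. The interfaces are $3$-manifolds of the form $\partial P\times Q$ or $P\times\partial Q$, possibly modified by the surgery. These are products of circles with punctured surfaces, or torus bundles over the circle, and both kinds are aspherical. Injectivity into the product pieces is clear. Injectivity into the surgered pieces I would verify on $\pi_1$, using the explicit presentations: as a mapping torus, the surgered piece has fundamental group the semidirect product $F\rtimes_\phi\mathbb{Z}$, and the boundary subgroup is a peripheral $\mathbb{Z}^2$ or a $(\text{surface}\times\mathbb{Z})$-subgroup. These peripheral subgroups inject by standard $3$-manifold facts, namely incompressibility of the boundary of Haken manifolds. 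For this step I would use Kirk's relator lists and peripheral data \cite{Kirk} to confirm that the meridians and Lagrangian pushoffs land where I claim, especially for multiplicity $n>1$.

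The final step assembles the blocks iteratively, gluing one interface at a time and applying the gluing lemma, including the self-gluings that close up the surface factors. Each intermediate space stays aspherical with injective remaining boundary. For that, I need the remaining boundary components to be injective in the union; this follows from the normal form for amalgams and HNN extensions, because the remaining boundary was already injective in a factor.

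The main obstacle I expect is the first step: choosing a decomposition that cuts the surfaces finely enough that each surgery torus lies in its own product block, while different tori sharing a surface factor still give injective interfaces. In the \cite{FPS,AP} constructions several tori use the same curves $a_i$ or $b_j$ on a genus-2 surface. If that happens, I would first try cutting $\Sigma_2$ along separating arcs into pants or one-holed tori so that the tori live in different slabs.
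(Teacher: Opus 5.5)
Your framework (aspherical pieces glued along $\pi_1$-injective faces, then Whitehead) is the paper's, and for multiplicity-one surgeries on the FPS product tori your Dehn-twist regluing picture is right. The gap is in the surgeries that make the families infinite.

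\textbf{Multiplicity $m$ surgeries.} Each family varies a single slope of the form $m\mu\pm\lambda$: the eighth FPS torus and the first AP torus. These are surgeries of multiplicity $m$, not $1/n$ surgeries $\mu+n\lambda$. The cut-and-reglue model~\eqref{eq:torussurgery} only produces slopes $\mu+r\lambda_1+s\lambda_2$. So for $m>1$ the surgered block is not the mapping torus or surface bundle you describe, and your injectivity argument has nothing to stand on. Incompressibility of boundaries of Haken $3$--manifolds is also not the relevant statement for the $3$--dimensional faces $P\times\partial Q$, $\partial P\times Q$ of a $4$--dimensional piece; in the multiplicity-one case the bundle exact sequence with a section does that job.

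The missing idea is a separate local analysis, Lemma~\ref{lem:multiplicity}. Cut $Q$ into an annulus and a band, so the strip becomes $H\times S^1$, where $H$ is the Dehn surgery on the curve in a level of $P\times I$. $H$ is again a genus-$2$ handlebody. But for $m>1$ its end surfaces carry only the proper free subgroups $\langle x,h^N\rangle$ and $\langle hx,h^N\rangle$ of $\pi_1(H)=\mathbb{F}(x,h)$, with $N=\pm m$, computed from Kirk's peripheral data. Injectivity of these end maps is what lets Whitehead's theorem reattach the band. The face injections and the corner intersection must then be checked directly in the resulting HNN extension; the paper uses the retraction onto $\mathbb{F}(t,z)$. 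As written, your argument does not reach any member with $m>1$.

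\textbf{Two further unjustified steps.} First, the AP model is not a product or a fiber sum of products. It is the Hacon--Pardini surface, a $\Sigma_2$-bundle over $\Sigma_2$ with monodromy $\tau$. Its sixth torus is the cylinder $(\widetilde b_1\widetilde b_2)\times\widetilde c_2'$ closed up by $\tau$, so it is not a torus $\alpha\times\beta$ in a product block, and you give no decomposition that handles it. The paper trivializes the bundle over one punctured torus $Q_1$ of the base. Over $Q_2$ it cuts the fiber into $\tau$-invariant pants, so both surgeries there become regluings of pants bundles along vertical faces, after a framing check.

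Second, ``it was already injective in a factor'' does not justify injectivity of the remaining boundary. After gluing two quadrants, a component such as $\partial P_i\times\Sigma_2=(\partial P_i\times Q_1)\cup(\partial P_i\times Q_2)$ meets both pieces along the corner torus. Its group is $L_1*_C L_2$, and this injects into $G_1*_E G_2$ only if $L_i\cap E=C$ in each piece. That corner-intersection statement is exactly what Lemmas~\ref{lem:product} and~\ref{lem:multiplicity} establish; without it the second gluing, of rows into the closed manifold, is unsupported. In the AP case a gluing face meets a remaining boundary piece in two components, so you also need the path condition $(\star)$ of Lemma~\ref{lem:boundary}. The paper verifies it by lifting homotopies through the bundle projections to $Q_2$.
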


Their fundamental groups are therefore infinite, have no torsion,
and are perfect---the last part being the luring reason for many to work on these examples. In fact, the boundary injections in our proof give another useful way to
show that these fundamental groups are nontrivial, without using
asphericity. The same injections also rule out metrics of strictly
negative sectional curvature; see Addendum~\ref{add:curvature}.

I should note that Ron Fintushel and Ron Stern conjectured that these
examples were symplectically aspherical, and suggested
(in private communication) approaching the question through metric
properties of the initial manifold. The broader conjecture that Luttinger surgery preserves
symplectic asphericity appears in~\cite{Fintushel}.
Indeed, symplectic asphericity alone would already rule out finite
fundamental groups. Recall that a symplectic manifold $(X,\omega)$
is \emph{symplectically aspherical} if the cohomology class
$[\omega]$ pairs trivially with every spherical homology class. If such
a closed symplectic $4$--manifold had finite fundamental group, its
universal cover $p\colon \widetilde X\to X$ would be closed and simply
connected. Every class in $H_2(\widetilde X;\Z)$ would then be spherical
by the Hurewicz theorem, so $[p^*\omega]=0$. This contradicts
$[p^*\omega]^2\ne0$, since $p^*\omega$ is a symplectic form on the
closed manifold $\widetilde X$. Our proof of Theorem~\ref{thm:families} takes a topological path instead and establishes
the stronger conclusion of ordinary asphericity for the families of constructions considered here. 

In fact, we also observe that symplectic asphericity is \emph{not} preserved under Luttinger surgery in general:

\begin{thm}\label{thm:nonpreservation}
There are many closed symplectically aspherical $4$--manifolds such that a Luttinger surgery gives a symplectic $4$--manifold admitting no symplectically aspherical form.
\end{thm}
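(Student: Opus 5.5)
The plan is to take a symplectically aspherical manifold on which a single Luttinger surgery produces a manifold with \emph{finite}, or even trivial, fundamental group. By the argument recalled just before the statement, a closed symplectic $4$--manifold with finite fundamental group admits no symplectically aspherical form: its universal cover is closed and simply connected, all of $H_2$ is spherical, and this contradicts $[p^*\omega]^2\neq 0$. So the whole problem reduces to finding symplectically aspherical $X$ and a Lagrangian torus $T\subset X$ such that a Luttinger surgery along $T$ kills $\pi_1$, or at least makes it finite.

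The natural source is products and fibrations with aspherical factors. Take $X=\Sigma_g\times\Sigma_h$ with $g,h\ge 1$ and the product form. It is aspherical, so every spherical class is zero in homology, and $X$ is trivially symplectically aspherical. Luttinger surgeries along products of curves $a_i\times b_j$ are the standard tool for killing generators of $\pi_1$. For the simplest instance, take $X=T^4=T^2\times T^2$. Classical computations show that Luttinger surgery on $T^4$ along a Lagrangian torus $a_1\times b_1$ can produce $T^2\times S^2$-like quotients, or a manifold with $\pi_1$ a proper quotient. However, one surgery reduces $b_1$ by at most one, so a single surgery cannot make $\pi_1$ finite on $T^4$.

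Since the statement says ``a Luttinger surgery'', I would instead let $X$ be the result of all but one of the surgeries in a known construction of a simply connected (or finite $\pi_1$) symplectic manifold from $\Sigma_g\times\Sigma_h$. Examples are the Akhmedov--Park or Fintushel--Park--Stern style constructions of exotic $\CP\#k\CPb$, or Baldridge--Kirk type manifolds. The key steps are then as follows.
\begin{enumerate}
\item Show that $X$, after the first $n-1$ surgeries, is still aspherical, or at least symplectically aspherical. I would use the Edmonds-type gluing along $\pi_1$-injective boundaries that the paper develops, or a torus-bundle/fibration structure over an aspherical surface. This gives symplectic asphericity for free.
\item Observe that the final surgery yields a simply connected symplectic manifold $X'$, using the published $\pi_1$ computations.
\item Conclude via the universal cover argument, which here is just $\pi_1=1$.
\end{enumerate}

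Taking infinitely many $g,h$ and surgery coefficients gives ``many'' examples. The main obstacle is step (1): proving that the intermediate manifold is aspherical. I expect the paper's own asphericity machinery to handle this, namely decomposing $X$ into products of punctured surfaces glued along incompressible tori or $3$--manifolds that are $\pi_1$-injective. There is an alternative if the intermediate manifold is not aspherical. One can try to show directly that $\pi_2$ pairs trivially with $\omega$, for instance because $H_2$ is generated by tori and surfaces of positive genus whose classes are not spherical.
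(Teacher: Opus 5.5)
Your reduction is sound: if a symplectically aspherical $(X,\omega)$ contained a Lagrangian torus whose Luttinger surgery had finite $\pi_1$, the universal cover argument would finish. The gap is that you never produce such an $X$. Step (1) carries all the content, and it is unlikely to hold for the sources you name.

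Two necessary conditions show how restrictive step (1) is. First, a single surgery changes $b_1$ by at most one, so you need $b_1(X)\le 1$. Second, the extension argument the paper applies to $X_T$ applies equally to $X$. If $[\omega]$ vanishes on spherical classes, it extends to a class on $K(\pi_1(X),1)$ whose square restricts to $[\omega]^2\neq 0$. So $H^4(\pi_1(X);\mathbb R)$ must contain a nonzero square of a degree-two class. With $b_1\le 1$, that class cannot come from cup products of degree-one classes, which also rules out pulling back area forms from positive-genus bases. In concrete reverse-engineering sequences, symplectic asphericity is lost long before the last step. For $X_1'$ in Remark~\ref{rmk:nonpreservation}, the first surgery already gives $\Z^3$, and $T^3$ has no $H^4$.

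The Edmonds machinery cannot certify step (1) either. In the product-quadrant configurations of Section~\ref{sec:constructions}, Lemmas~\ref{lem:product} and~\ref{lem:multiplicity} together with the gluing lemmas keep \emph{every} stage aspherical, including the last. That is exactly why the FPS and AP manifolds never become simply connected. So your final torus would have to lie outside such a decomposition, and then you have no method left for the penultimate stage.

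Your fallback is not an argument either: $H_2$ being generated by tori and higher-genus surfaces does not prevent combinations of these classes from being spherical. Also, Luttinger surgery on $a_1\x b_1\subset T^4$ gives an aspherical $T^2$--bundle over $T^2$, not anything $T^2\x S^2$-like.

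The missing idea is that you need neither finite $\pi_1(X_T)$ nor asphericity of $X$.
\begin{itemize}
\item For the conclusion, it suffices that $K(\pi_1(X_T),1)$ have vanishing $H^4$. Any class vanishing on spherical classes extends over it and therefore has zero square.
\item For the hypothesis, a symplectic class built from cup products of degree-one classes plus a pullback from an aspherical base vanishes on spheres automatically.
\end{itemize}
The paper takes $X=V\#_{\Sigma_2}(\Sigma_2\x\Sigma_h)$ with Matsumoto's fibration $V$, so $\pi_1(X)\cong\Z^2\x\pi_1(\Sigma_h)$. It builds a Gompf--Thurston form in the class $p\cup q+\kappa f^*[\sigma]$ that makes $a_2'\x d_1'$ Lagrangian, and performs one Luttinger surgery killing $a_2$. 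The result has $\pi_1\cong\Z\x\pi_1(\Sigma_h)$, whose classifying space $S^1\x\Sigma_h$ is three-dimensional. Varying $h$ gives infinitely many examples.
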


Let me note that  such examples can also be spotted in constructions of exotic $4$--manifolds in the existing literature; see Remark~\ref{rmk:nonpreservation}. We add this result to emphasize that the failure of Luttinger surgeries to construct simply connected exotica here is really about the chosen models, and not the operation itself.

Here is the outline of the paper: We review Edmonds' approach and establish the gluing and torus surgery
lemmas in Section~\ref{sec:gluing}, including a comparison with Kirk's
presentation for the model with two surgery tori.
The two families of fake
$S^2\x S^2$'s are treated in Sections~\ref{sec:fps}
and~\ref{sec:ap}, respectively. For each family, we first prove
asphericity for the symplectic member, then extend the argument to the remaining members obtained by torus surgeries of higher multiplicity.
The proof for the symplectic examples (and thus the symplectic asphericity) is shorter, since their
product quadrants are indeed surface bundles. The more technical Lemma~\ref{lem:multiplicity}
supplies the additional local analysis for higher multiplicity torus surgeries, whereas the boundary gluing argument in Lemma~\ref{lem:boundary} is needed
for handling models that are not products as in \cite{AP}.
The AP example illustrates how our arguments can be applied to more general surface bundle models instead of surface products. In Section~\ref{sec:nonpreservation}, we prove
Theorem~\ref{thm:nonpreservation}. Further
general conclusions are discussed in Section~\ref{sec:addenda}.

\subsection*{Conventions}
We use the following conventions for surfaces, groups and torus surgeries.
All manifolds and maps below are smooth.
We compose paths from left to right.
We write $\Sigma_g^b$ for a compact connected oriented surface of
 genus $g$ with $b$ boundary components, and $\Sigma_g=\Sigma_g^0$.
We use
$[a,b]=aba^{-1}b^{-1}$, and $\F(a,b,\ldots)$ denotes the free group
on the displayed generators. We write $\Z_n$ for the cyclic group
of order $n\ge1$, and use the convention $\Z_0=\Z$.
For a surface $S$, $\Mod(S)$ denotes its orientation preserving
mapping class group; maps and isotopies fix $\partial S$ pointwise.
A framed torus surgery is specified by the image of the
meridian of the attached $T^2\x D^2$; a boundary diffeomorphism
preserving this meridian extends over $T^2\x D^2$. We write $\mu$
for the original meridian and $\lambda$ for a chosen framed
longitude. Slopes $\mu+k\lambda$ have multiplicity $1$, whereas
$m\mu\pm\lambda$ have multiplicity $m$. We keep this distinction
throughout.

\subsection*{Acknowledgements}
The author was partially supported by the NSF grant 
DMS-2506431 and by a Simons Foundation Travel Grant. We thank Ron
Fintushel for bringing the asphericity question back to our attention
while making suggestions for the $\mathrm{K3}$ problem list~\cite{K3}.
We thank Allan Edmonds, Ron Fintushel, Paul Kirk, Anubhav Mukherjee, Lisa Piccirillo, Ron Stern, Nathan Sunukjian, Zolt\'{a}n Szab\'{o} and Cliff Taubes for their interest in this work and their kind comments and questions. I'm particularly grateful to Paul for his thorough and careful work
on the subject, which has been inspirational, and for his encouragement
and help in brainstorming a fun title for this article.
OpenAI's ChatGPT was used for figure generation, and final editing.
The author takes full responsibility for all the mathematical
arguments and results.

\section{Aspherical gluings and torus surgery}\label{sec:gluing}

We review Edmonds' construction method~\cite{Edmonds}, which suggests
gluing products of punctured surfaces along their $\pi_1$-injective
faces to produce aspherical $4$--manifolds. We first discuss the torus
surgery models and boundary injections needed for these gluings.
The following form of Whitehead's theorem~\cite{Whitehead} is the key
input.

\begin{thm}[Whitehead]\label{thm:whitehead}
Let a manifold be obtained by gluing finitely many aspherical pieces along
disjoint aspherical boundary submanifolds. If the inclusions of
each gluing submanifold into the adjoining pieces are
$\pi_1$-injective, then the resulting manifold is aspherical.
The pieces and gluing submanifolds remain $\pi_1$-injective.
\end{thm}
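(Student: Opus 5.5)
The plan is to reduce to the case of a single gluing and then induct on the number of gluing submanifolds. Cut the manifold $X$ along the disjoint gluing submanifolds $N_1,\dots,N_k$. Each $N_j$ is a codimension-one boundary submanifold, so it has a collar neighborhood $N_j\times[-1,1]$ in $X$. This exhibits $X$ as a graph of spaces: vertices are the pieces $X_i$, and edges are the $N_j$. Each edge space maps into its adjacent vertex spaces by the given boundary inclusions, and a self-gluing of one piece gives a loop edge. Since there are finitely many gluings, we can build $X$ one gluing at a time. Each step is either an amalgamated union $A\cup_N B$ of two aspherical spaces along $N$, or an HNN-type self-gluing $A\cup_{N\sqcup N} (N\times[0,1])$ of one aspherical space.

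For a single step, first apply van Kampen: $\pi_1$ is the amalgamated product $\pi_1A *_{\pi_1 N}\pi_1B$, or the HNN extension $\pi_1A*_{\pi_1N}$. Since both edge inclusions are injective, the standard normal form theorem for amalgams and HNN extensions shows that the vertex groups and the edge group inject into the result. This gives the last sentence of the statement at each stage. It also shows that the newly assembled piece is again $\pi_1$-injective in later gluings, because injectivity composes. Asphericity should then be proved on the universal cover. Let $\widetilde X\to X$ be the universal cover. By injectivity, each component of the preimage of a piece $A$ is a copy of the universal cover $\widetilde A$, hence contractible, and each component of the preimage of $N$ is a copy of $\widetilde N$, also contractible. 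The decomposition of $\widetilde X$ into these lifts is therefore dual to a graph $T$. This graph is the Bass--Serre tree of the splitting, and it is simply connected because $\widetilde X$ is. Now collapse each contractible vertex lift and each contractible edge slab $\widetilde N\times[-1,1]$ appropriately. Alternatively, compute homotopy groups directly, using Mayer--Vietoris on $\widetilde X$ together with Hurewicz. Either way, $\widetilde X$ is homotopy equivalent to the tree $T$, which is contractible. Hence $\widetilde X$ is contractible and $X$ is aspherical.

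The main obstacle is the identification of lifts in the universal cover. Injectivity is exactly what guarantees that a component of $p^{-1}(A)$ is the universal cover of $A$ rather than a proper intermediate cover with nontrivial $\pi_1$. Without injectivity these components could have nontrivial higher homotopy, since intermediate covers of aspherical spaces are aspherical but need not be contractible, and the tree argument would fail. I would handle this cleanly by covering space theory: the component of $p^{-1}(A)$ corresponds to the kernel of $\pi_1A\to\pi_1X$, which is trivial. I would then argue contractibility of $\widetilde X$ by Mayer--Vietoris. The reduced homology of the union of contractible pieces glued along contractible pieces in a tree pattern equals the reduced homology of the tree, which is zero. Combined with simple connectivity and the Hurewicz and Whitehead theorems, this gives contractibility. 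For an infinite tree, this requires a direct limit argument over finite subtrees, which is routine.
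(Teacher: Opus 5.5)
Your proposal is correct and is essentially the paper's argument: the paper likewise builds the universal cover from contractible lifts of the pieces and of the gluing submanifolds arranged along a tree (citing Hatcher, Theorem~1B.11), and it handles a self-gluing by inserting a cylinder, which corresponds to your loop edge/HNN case. The only difference is one of emphasis: you obtain $\pi_1$-injectivity first from the normal form theorem and then identify the lifts as universal covers, whereas the Hatcher-style construction builds the contractible cover directly and gets injectivity as a consequence; either order works.
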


The standard proof constructs the universal cover from contractible
lifts of the pieces, glued along contractible lifts of the gluing
submanifolds. These lifts are arranged along a tree, so the resulting
space is contractible; see~\cite[Theorem~1B.11]{Hatcher}.
To identify two disjoint boundary faces of a single piece, we can insert a
cylinder between them and apply the same theorem, as explained
in~\cite{Edmonds}. The boundary faces used for a gluing can
be $\pi_1$-injective even when the full boundary is not---which gives more flexibility to the construction.

\enlargethispage{3pt}
We use the topological form of the local model in~\cite{ADK} to
realize torus surgery of multiplicity $1$ as such a regluing.
Suppose a framed torus $T$ lies in the interior of a
hypersurface $Y$ with trivial normal bundle in a $4$--manifold. We require its prescribed framing
to agree with the one formed by a normal vector to $T$ within $Y$
and a normal vector to $Y$. In a collar
$T^2\x[-1,1]\subset Y$, use coordinates
$(\theta_1,\theta_2,v)$, with the circle coordinates taken modulo one.
For integers $r,s$, the map
\begin{equation}\label{eq:torussurgery}
(\theta_1,\theta_2,v)\longmapsto
(\theta_1+r\rho(v),\theta_2+s\rho(v),v),
\end{equation}
where $\rho$ is zero near $-1$ and one near $1$, extends by the
identity to a diffeomorphism of $Y$. Cutting along $Y$ and changing
its gluing by this map is the torus surgery with meridian slope
$\mu+r\lambda_1+s\lambda_2$, up to the consistent choice of meridian
orientation. To see this, let $w$ be the normal coordinate to $Y$,
so that the normal square is cut along $w=0$. On the half $w\ge0$,
change the torus coordinates to
$(\theta_1-r\rho(v),\theta_2-s\rho(v))$. The identification across
$w=0$ becomes the identity, so the two halves form $T^2\x D^2$
after rounding corners. In the resulting boundary parametrization,
the map~\eqref{eq:torussurgery} acts nontrivially only over one edge of
the normal square, while the other three edges are unchanged. Tracing this
boundary gives the claimed meridian slope.

Thus torus surgery leaves the pieces obtained by cutting along $Y$
unchanged and composes their boundary identification with the
diffeomorphism~\eqref{eq:torussurgery}.\footnote{%
This is the same idea used by Lickorish~\cite{Lickorish} in proving
that every closed connected oriented $3$--manifold is obtained by
surgery on a link in $S^3$: he changes the gluing of two handlebodies
by Dehn twists and realizes these twists by surgeries.}
This map is the identity near
$\partial Y$. For several disjoint hypersurfaces, we can make these
changes independently.

We apply this to a product of tori $P,Q$, each with one boundary component.
Consider two product tori
$\gamma_j\x\eta_j\subset\Int(P\x Q)$, where $\eta_1,\eta_2$
are disjoint parallel nonseparating curves in $Q$. The curves
$\gamma_1,\gamma_2$ in $P$ need not be disjoint (and often in these choices of surgeries, they won't be disjoint). The two boundary faces
$P\x\partial Q$ and $\partial P\x Q$ meet in the
\emph{corner torus} $\partial P\x\partial Q$. We call the image
of its fundamental group in the surgered quadrant the \emph{corner subgroup}.
As in~\cite{Edmonds}, we will glue along the
individual product faces. The following calculation identifies their
fundamental groups as subgroups of the fundamental group of the
surgered quadrant, and shows that their intersection is the corner subgroup.

\begin{lem}\label{lem:product}
Perform surgeries on the two tori with slopes $\mu_j+k_j\gamma_j$ in their product
framings. The resulting manifold $Z$ is a $P$-bundle over $Q$, agreeing
with the original product near both boundary faces. These faces inject
on fundamental groups, and their images intersect precisely in the
image of the corner torus. The manifold $Z$ is aspherical.
\end{lem}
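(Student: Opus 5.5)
We will use the regluing description of multiplicity-one torus surgery just established. Since $\eta_1,\eta_2$ are disjoint parallel nonseparating curves in $Q$, we choose a thin annulus neighborhood $A\cong\eta\x[-1,1]\subset\Int Q$ containing both, with $\eta_j=\eta\x\{t_j\}$. We then take the hypersurfaces $Y_j=P\x\eta_j'$, where $\eta_j'$ is a parallel copy of $\eta_j$ pushed slightly off it. Each torus $\gamma_j\x\eta_j$ sits in $Y_j$ after a small isotopy pushing $\eta_j$ onto $\eta_j'$. The product framing of $\gamma_j\x\eta_j$ is formed by a normal to $\gamma_j$ in $P$ (a vector in $Y_j$) and a normal to $\eta_j'$ in $Q$ (normal to $Y_j$), so it agrees with the framing required in the local model.

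Surgery with slope $\mu_j+k_j\gamma_j$ therefore amounts to cutting $P\x Q$ along $Y_j$ and regluing by the map~\eqref{eq:torussurgery} with $r=k_j$, $s=0$ in coordinates where $\theta_1$ runs along $\gamma_j$ and $\theta_2$ along $\eta_j$. This map is $(x,q)\mapsto(\tau_j^{k_j}(x),q)$, where $\tau_j$ is the Dehn twist along $\gamma_j$ in $P$. It is supported in the interior of $P$ and acts fiberwise over $\eta_j'$. Regluing $P\x Q$ cut along the $Y_j$ by fiber-preserving maps gives a $P$-bundle $Z\to Q$. Its monodromy is $\tau_j^{\pm k_j}$ around loops crossing $\eta_j'$ and trivial elsewhere. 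The bundle is trivial away from $P\x A$ and agrees with $P\x Q$ near $P\x\partial Q$. Since the twists are supported away from $\partial P$, the subbundle $\partial P\x Q$ is untouched. This proves the first claim.

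For asphericity, $Z$ is a bundle with aspherical fiber $P$ (a punctured torus) and aspherical base $Q$. The long exact homotopy sequence gives $\pi_k(Z)=0$ for $k\ge2$ and the short exact sequence $1\to\pi_1P\to\pi_1Z\to\pi_1Q\to1$. Alternatively, Theorem~\ref{thm:whitehead} applies directly, gluing copies of $P\x(\text{pieces of }Q)$ along the $\pi_1$-injective $Y_j$.

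For injectivity, the face $\partial P\x Q$ is a section-like subbundle with trivial monodromy. Under projection it maps isomorphically onto $\pi_1Q$, up to the central factor $\pi_1\partial P$, so it injects. More precisely, $\pi_1(\partial P\x Q)=\Z\x\pi_1Q$ maps to $\pi_1Z$ compatibly with the extension. Injectivity on the $\Z$ factor holds because $\partial P$ is nontrivial in the fiber group $\F(a,b)$. The face $P\x\partial Q$ is a fiber over a boundary circle $c$. Its group $\pi_1P\x\langle c\rangle$ injects because $\pi_1P$ is the fiber subgroup and $c$ projects to an infinite-order element of $\pi_1Q$. A nontrivial power $c^n$ acts trivially on $P$ there, and a relation $w c^n=1$ forces $n=0$ and then $w=1$.

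For the intersection, an element common to both images projects into $\pi_1Q\cap\langle c\rangle=\langle c\rangle$, since the images are $\pi_1Q$ and $\langle c\rangle$. Its fiber component lies in $\pi_1P\cap(\text{centralizer-type image of }\pi_1\partial P)$. This needs care, since the images sit in $\pi_1Z$ as a semidirect product $\pi_1P\rtimes\pi_1Q$ only once we fix the section given by $\partial P\x Q$. Using that section, the image of $\partial P\x Q$ is $\langle\partial\rangle\x s(\pi_1Q)$. The image of $P\x\partial Q$ is $\pi_1P\cdot s(c)$, using that the section is trivial near $\partial Q$. Their intersection is $\langle\partial\rangle\x\langle s(c)\rangle$, the corner subgroup.

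The main obstacle is this last bookkeeping. The section $s$ along $\partial P$ is genuinely a section of the monodromy action, since the twists fix $\partial P$. Writing $\pi_1Z\cong\F(a,b)\rtimes\pi_1Q$ via that section, with $\pi_1Q$ free, should make the intersection computation immediate.
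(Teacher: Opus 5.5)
Your proposal is correct and follows essentially the same route as the paper. You realize each surgery as a fiberwise regluing by $t_{\gamma_j}^{k_j}$ along $P\x\eta_j$, which gives a $P$-bundle over $Q$ with section $q\mapsto(p,q)$ for $p\in\partial P$; from the unique normal form in $\pi_1(Z)\cong\pi_1P\rtimes\pi_1Q$ you then read off the face images, $\pi_1P\cdot\langle s(c)\rangle$ (not $\pi_1P\cdot s(c)$ as you wrote) and $\langle\partial\rangle\x s(\pi_1Q)$, and their intersection; the ``main obstacle'' you flag is settled exactly by that uniqueness of normal forms.
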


\begin{proof}
We put the two $\eta_j$ at distinct levels of an annulus in $Q$.
In~\eqref{eq:torussurgery}, we use $\theta_1$ in the $\gamma_j$
direction and $\theta_2$ in the $\eta_j$ direction. The prescribed
slope then gives $(r,s)=(k_j,0)$, so the regluing is
$t_{\gamma_j}^{k_j}$ on the fiber and the identity in the base
direction. Here $t_{\gamma_j}$ is the Dehn twist with the sign
determined by our surgery convention. Performing the two regluings
in their order along the annulus gives a $P$-bundle over $Q$.
Their supports miss the product collars, and a fixed point
$p\in\partial P$ gives the section $s\colon Q\to Z$, $s(q)=(p,q)$.

We take the basepoint in the corner and write
\[
F=\pi_1(P,p),\qquad B=\pi_1(Q,q),\qquad
u=[\partial P],\quad\delta=[\partial Q].
\]
The homotopy exact sequence gives
$1\to F\to\pi_1(Z)\to B\to1$, which the section $s$ splits.
Thus $\pi_1(Z)=F\rtimes_\rho B$, where $\rho$ is the monodromy action
on $F$. Each element has a unique expression in this semidirect product, which we
write as $(f,b)$.

We choose generators $c,d$ of $B$, with $c$ parallel to the $\eta_j$
and $d$ crossing their annulus. Their monodromies are $\id$ and
$\varphi=t_{\gamma_1}^{k_1}t_{\gamma_2}^{k_2}$, respectively,
with the factors numbered in the order of composition.
With $\delta=[c,d]$, the boundary monodromy is
\[
[\id,\varphi]=\id\quad\text{in }\Mod(P).
\]
Since the fiber boundary is fixed pointwise,
\[
\rho(b)(u)=u\quad(b\in B),\qquad \rho(\delta)=\id.
\]
The images of $P\x\partial Q$ and $\partial P\x Q$ are
\begin{equation}\label{eq:faces}
K=\{(f,\delta^n)\mid f\in F,\ n\in\Z\},\qquad
L=\{(u^m,b)\mid b\in B,\ m\in\Z\}.
\end{equation}
The boundary elements $u=[\partial P]\in F$ and
$\delta=[\partial Q]\in B$ are nontrivial commutators in free groups
and hence have infinite order. The two inclusion maps are
$(f,n)\mapsto(f,\delta^n)$ from $F\x\Z$ and
$(m,b)\mapsto(u^m,b)$ from $\Z\x B$, which are clearly injective. A pair lies in both images exactly when its first
coordinate is a power of $u$ and its second is a power of $\delta$.
Therefore
\begin{equation}\label{eq:corner}
K\cap L=\langle u,\delta\rangle=\im \, \pi_1(\partial P\x\partial Q).
\end{equation}
Finally, it follows from the homotopy exact sequence of the surface bundle
that $Z$ is aspherical.
\end{proof}

In Lemma~\ref{lem:product}, we allow arbitrary simple closed curves on the fiber. 
For a pair intersecting once, Kirk's presentation of the complement
of the two surgery tori~\cite[Theorem~9.2]{Kirk} gives the same semidirect product
description after adding the surgery relations. Using the
framed longitudes in the $\gamma_j$-directions, we obtain
\begin{equation}\label{eq:kirk}
\left\langle x,y,t,z\ \middle|\
[t,x]=[t,y]=1,\quad zyz^{-1}=yx^p,\quad
zxz^{-1}=(yx^p)^{-r}x\right\rangle.
\end{equation}
Here $p,r\in\Z$ encode the two surgery coefficients. In Kirk's
coordinates the based longitude in the $\gamma_2$-direction on
the second surgery torus is $zyz^{-1}$, not $y$. The conjugation
comes from his choice of basing path.
Using this longitude gives~\eqref{eq:kirk}.
The base generator $z$ acts on the fiber group $\F(x,y)$ by
the automorphism induced by the surface bundle monodromy:
\[
y\longmapsto yx^p,\qquad x\longmapsto(yx^p)^{-r}x.
\]
This automorphism fixes the fiber boundary word $[y,x^{-1}]$.
Thus~\eqref{eq:kirk} is the semidirect product described in the proof
above, with the other base generator $t$ acting trivially.

In all the constructions we will consider, we pass from the
symplectic member to the rest of the family by changing the
multiplicity of the surgery on a single torus in a product quadrant. We can make this change independently in several distinct
product quadrants, provided the surgery on the other torus in each
quadrant still has slope $\mu\pm\gamma$. The lemma is applied to
each surgered quadrant before gluing.
\footnote{For the pair of torus surgeries in Lemma~\ref{lem:product}, one can view the resulting closed $4$--manifold as a $T^2$--bundle over $T^2$ as explained in 
\cite[Section~3.2.2]{BaykurThesis}.}
Here we need the boundary information for the punctured pieces, including
torus surgeries of higher multiplicity.
The following version of
Lemma~\ref{lem:product} shows that asphericity and the boundary injections
needed for gluing remain intact under these torus surgeries.

\begin{lem}\label{lem:multiplicity}
In the product model above, suppose $\gamma_1$ and $\gamma_2$ intersect
once transversally. Perform surgery on one torus with slope $\mu\pm\gamma$ and on the other with
slope $m\mu\pm\gamma$, where $m\ge1$. The resulting quadrant is
aspherical. Both product faces inject on fundamental groups, and the intersection of their images is exactly the corner subgroup.
\end{lem}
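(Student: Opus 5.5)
We plan to reduce the statement to Theorem~\ref{thm:whitehead} by a second, finer decomposition of the quadrant, using the fact that the surgery with slope $\mu\pm\gamma$ is a regluing. First we apply the regluing description~\eqref{eq:torussurgery} to the multiplicity-$1$ surgery. Say this is the one on $\gamma_1\x\eta_1$. It turns $P\x Q$ into a $P$-bundle over $Q$, exactly as in the proof of Lemma~\ref{lem:product}. In this bundle the second torus is still a product torus $\gamma_2'\x\eta_2$, where $\gamma_2'=t_{\gamma_1}^{\pm1}(\gamma_2)$ is again a nonseparating simple closed curve in the fiber, possibly with a shifted product framing; we absorb that shift into the slope. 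So it suffices to understand a single surgery of multiplicity $m$ on a product torus $\gamma\x\eta$ in a $P$-bundle over $Q$ whose monodromy around the $c$-direction (parallel to $\eta$) is trivial.

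Next we localize. Let $A\subset Q$ be an annulus around $\eta$, so that the part of the bundle over $A$ is $P\x[0,1]\x S^1_c$. The torus is then $(\gamma\x\{1/2\})\x S^1_c$, and the surgery is $M'\x S^1_c$, where $M'$ is Dehn surgery on the level curve $\gamma\x\{1/2\}$ in $P\x[0,1]$. Cut $P$ along an annulus $N$ around $\gamma$. Since $\gamma$ is nonseparating in the punctured torus $P$, the complement is a pair of pants $R=\Sigma_0^3$. Then $P\x[0,1]=(R\x[0,1])\cup(N\x[0,1])$, glued along the two vertical annuli $\partial N\x[0,1]$. The solid torus $N\x[0,1]$ contains the surgery curve as its core. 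Surgery on the core with a slope of distance $m$ from the meridian gives a solid torus $V$, in which the gluing annuli wind $m$ times around the new core.

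Thus the surgered quadrant $Z$ is assembled from four kinds of pieces:
\begin{itemize}
\item $R\x[0,1]\x S^1_c$;
\item $V\x S^1_c\cong S^1\x D^2\x S^1$;
\item the product collar $P\x[0,1]\x S^1_c$ near $P\x\partial Q$;
\item the remaining part of the bundle over $Q\setminus A$, which is $P\x\Sigma_0^3$ or a bundle with that fiber.
\end{itemize}
These pieces are glued along tori (annulus $\x S^1_c$) and along copies of $P\x S^1$. Every piece and every gluing submanifold is aspherical. The $\pi_1$-injectivity checks are elementary: the core of a gluing annulus maps to the $m$-th power of the core of $V$ (this uses $m\ge1$), and to a boundary curve of $R$, which is primitive in a free group. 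So Theorem~\ref{thm:whitehead} gives asphericity of $Z$, together with injectivity of each piece into $\pi_1(Z)$.

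The main obstacle is the last part of the statement: showing that the face groups $K$ and $L$ inject and that $K\cap L$ is exactly the corner subgroup. For injectivity, we observe that both faces live in pieces that are not touched by the surgery. The face $\partial P\x Q$ lies in the $\partial P$-part of every piece, and $P\x\partial Q$ lies in the bundle over $Q\setminus A$. We then argue with the graph of groups from the decomposition above. The face $P\x\partial Q$ sits inside a vertex group, so it injects. For the face $\partial P\x Q$, we note that $\partial P$ misses $N$, so this face meets only the $R$-pieces and the pieces over $Q\setminus A$. Its group maps onto a subgroup carried by the subgraph of groups obtained by replacing $P$ with a collar of $\partial P$. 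That subgraph has injective edge maps, since $u=[\partial P]$ is nontrivial in $\pi_1(R)$ and in $F$, so the sub-graph of groups injects.

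For the intersection we first try a normal-form argument. An element of $K\cap L$ has a reduced form in the graph of groups that avoids the $V$-vertex. So it lies in the image of the fundamental group of the unsurgered complement $P\x Q\setminus(V\x S^1_c)$. That image is a subgroup in which we can compute as in Lemma~\ref{lem:product}. There~\eqref{eq:corner} identifies the intersection as $\langle u,\delta\rangle$. If the normal-form bookkeeping becomes unwieldy, the fallback is Kirk's presentation~\eqref{eq:kirk}, with the relation for the $m$-fold slope, which makes $\pi_1(Z)$ an amalgam of $F\rtimes B$-type groups over $\Z^2$, and we check the intersection there directly.
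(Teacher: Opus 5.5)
\textbf{Gap: the end surfaces.} You never show that the end surfaces $P\x\{0\}$ and $P\x\{1\}$ are $\pi_1$-injective in the surgered middle block, and this is the decisive step.

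Your four pieces are not glued along disjoint faces. The faces $R\x\{0\}\x S^1_c$, $N\x\{0\}\x S^1_c$ and $\partial N\x[0,1]\x S^1_c$ meet along the tori $\partial N\x\{0\}\x S^1_c$, so Theorem~\ref{thm:whitehead} does not apply to that decomposition. Gluing $V\x S^1_c$ last also fails, since its full boundary $\partial V\x S^1_c$ is not $\pi_1$-injective (the new meridian dies).

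The workable staging is as follows. First glue $R\x[0,1]\x S^1_c$ to $V\x S^1_c$ along the two annuli; this is exactly where your winding-$m$ check is used. This gives $M'\x S^1_c$, with $M'$ a genus-$2$ handlebody. Then glue along $P\x\{0,1\}\x S^1_c$. That second gluing needs the two maps $\pi_1(P)\to\pi_1(M')$, which are homomorphisms $\F_2\to\F_2$, to be injective. None of your checks addresses this.

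It is the heart of the paper's proof. There, Kirk's peripheral data give the end images $\langle x,h^N\rangle$ and $\langle hx,h^N\rangle$ in $\pi_1(H)=\F(x,h)$, which are free of rank two because $N=\pm m\ne0$. All your later claims (vertex groups injecting, normal forms) rest on this.

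You can close the gap inside your own setup. The decomposition $P\x\{0\}=(R\x\{0\})\cup(N\x\{0\})$ maps to $M'=(R\x[0,1])\cup V$ as a morphism of graphs of spaces over the same graph. The cores of $N\x\{0\}$ and of both annuli are all longitudes, so for $m\ge1$ they map injectively onto the same subgroup $\langle\mu^m\rangle$ of $\pi_1(V)\cong\Z$. Hence the intersection condition behind \eqref{eq:subamalgam} holds at both vertices, reduced words stay reduced, and the end maps inject.

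\textbf{The face and intersection steps also need repair.}

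For the face $\partial P\x Q$: injective edge maps do not by themselves make the $\partial P$-subgraph of groups inject. You need, as in \eqref{eq:subamalgam}, that at each edge the $\partial P$-part of the vertex group meets the edge group exactly in the $\partial P$-part of the edge group. This is true here, since $u$ lies in both end images and the bundle over $Q\setminus A$ has a section through $\partial P$, but it must be checked.

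For the intersection: your argument is vacuous as written. $\pi_1$ of the complement of $V\x S^1_c$ surjects onto $\pi_1(Z)$, non-injectively, so ``lying in its image'' is no restriction, and \eqref{eq:corner} was computed in a different group. What does work is the following. $K$ lies in the vertex group $G_{\mathrm{out}}$ of the bundle over $Q\setminus A$. Reduced words of positive length are not in a vertex group, so $L\cap G_{\mathrm{out}}=\langle u\rangle\x\pi_1(Q\setminus A)$. In this split extension, its intersection with $K$ is $\langle u,\delta\rangle$, as in Lemma~\ref{lem:product}.

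With these repairs, your normal-form route is a valid alternative to the paper's HNN presentation \eqref{eq:multiplicityHNN} and its retraction $\rho$ onto $\F(t,z)$. Your fallback via \eqref{eq:kirk} is not, since that presentation only covers multiplicity one.
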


\begin{proof}
A diffeomorphism of $P$ interchanges the essential unoriented curves
$\gamma_1,\gamma_2$. A diffeomorphism
of $Q$ interchanges $\eta_1,\eta_2$, the two boundary circles of
the annulus they cobound. Their product therefore exchanges the
two surgery tori, with any changes of sign allowed in the statement.
Thus we may take the varying surgery to be the second one.

The torus $Q$ with one boundary component is obtained from an annulus by
attaching a band joining its two boundary circles. We choose this
annulus to contain $\eta_1,\eta_2$ as parallel core curves, as in
Figure~\ref{fig:annulus-band}. Removing
the band leaves $I\x S^1_t$, so the corresponding piece of
$P\x Q$ is $P \x (I \x S^1_t) = (P\x I)\x S^1_t$.
We first carry out both surgeries with this band in $Q$ removed,
and reattach it afterward.
Both torus surgeries are products with $S^1_t$ of Dehn surgeries
on curves in level surfaces of $P\x I$. For the surgery of
multiplicity $1$, the description used in Lemma~\ref{lem:product}
cuts $P\x I$ along a level surface and reglues by a Dehn twist
about $\gamma_1$. The result is again $P\x I$, with the
identification of one end changed by this twist. The remaining
surgery is the one with slope $m\mu\pm\gamma_2$. Under this product
identification, its surgery curve is still nonseparating in a level
surface.

Recall that $P \cong \Sigma_1^1$. We can therefore view
$P$ as an annulus with the nonseparating surgery curve as core,
together with a band joining its two boundary circles. A properly
embedded arc across this band is disjoint from the surgery curve and
cuts $P$ back to the annulus. (Similar to the picture for $Q$ given in  Figure~\ref{fig:annulus-band}.)
Taking products with $I$ turns the annulus into a solid torus and
the band into a $3$--dimensional $1$--handle. This describes $P\x I$
as a handlebody of genus $2$, with the surgery curve as a core of the
solid torus.

The complement of a tubular neighborhood of the core in that
solid torus is $S^1 \x S^1 \x I$.
Dehn surgery with any primitive slope
glues a solid torus to one end of this collar, giving a solid torus
again. The $1$--handle is untouched, so the surgered $3$--manifold is
again a handlebody of genus $2$ $H$. Taking its product with the unchanged
circle factor gives the surgered strip $H\x S^1_t$.

A handlebody retracts onto a graph, so $H\x S^1_t$ is
aspherical. We next check that the two end surfaces inject into $H$
on fundamental groups; this is what allows us to reattach the band
using Whitehead's theorem --- which we will do in a bit.

\begin{figure}[htbp]
\centering
\definecolor{curveblue}{RGB}{0,105,170}
\definecolor{pathorange}{RGB}{191,104,16}
\definecolor{pathpurple}{RGB}{117,77,143}
\definecolor{loopgreen}{RGB}{0,120,98}
\tikzset{
  surface/.style={line width=.55pt,draw=black!65},
  curve/.style={line width=.9pt,line cap=round},
  arrow/.style={postaction={decorate},decoration={markings,
    mark=at position .56 with {\arrow{Stealth[length=4pt,width=3.4pt]}}}},
  paired/.style={postaction={decorate},decoration={markings,
    mark=at position .5 with {\arrow{Stealth[length=4pt,width=3.4pt]}\arrow[xshift=-4pt]{Stealth[length=4pt,width=3.4pt]}}}}
}
\resizebox{.44\textwidth}{!}{%
\begin{tikzpicture}[x=1.45cm,y=1.05cm,font=\normalsize]
  \fill[black!3] (0,0) rectangle (4,2.8);
  \path[fill=pathpurple!7,draw=black!60,line width=.55pt]
    (0,1.10) .. controls (-1.12,1.10) and (-1.12,4.45) .. (0,4.45)
    -- (4,4.45)
    .. controls (5.12,4.45) and (5.12,1.10) .. (4,1.10)
    -- (4,1.70)
    .. controls (4.55,1.70) and (4.55,3.73) .. (4,3.73)
    -- (0,3.73)
    .. controls (-.55,3.73) and (-.55,1.70) .. (0,1.70)
    -- cycle;
  \draw[surface,paired] (0,0) -- (4,0);
  \draw[surface,paired] (0,2.8) -- (4,2.8);
  \draw[surface] (0,0) -- (0,1.10) (0,1.70) -- (0,2.8);
  \draw[surface] (4,0) -- (4,1.10) (4,1.70) -- (4,2.8);
  \draw[black!30,densely dashed,line width=.4pt] (0,1.10)--(0,1.70) (4,1.10)--(4,1.70);
  \node[black!65] at (2,3.38) {band};
  \draw[black!50,line width=.4pt,-{Stealth[length=3pt]}] (2,3.56)--(2,3.85);

  \draw[curve,curveblue,arrow] (1.28,0) -- (1.28,2.8);
  \draw[curve,curveblue,arrow] (2.72,0) -- (2.72,2.8);
  \node[curveblue,anchor=west] at (1.35,.54) {$\eta_1$};
  \node[curveblue,anchor=east] at (2.65,.54) {$\eta_2$};

  \draw[curve,loopgreen,arrow] (0,0) -- (0,2.8);
  \draw[curve,loopgreen,arrow] (4,0) -- (4,2.8);
  \node[loopgreen,anchor=east] at (-.12,.55) {$t$};
  \node[loopgreen,anchor=west] at (4.12,.55) {$t_+$};

  \draw[curve,pathorange,arrow] (0,1.40) -- (4,1.40);
  \node[pathorange,fill=black!3,inner sep=2pt] at (2,1.11) {$\alpha$};
  \draw[curve,pathpurple,arrow]
    (0,1.40) .. controls (-.835,1.40) and (-.835,4.09) .. (0,4.09)
    -- (4,4.09)
    .. controls (4.835,4.09) and (4.835,1.40) .. (4,1.40);
  \node[pathpurple,fill=pathpurple!7,inner sep=2pt] at (2,4.09) {$\beta$};
  \fill (0,1.40) circle[radius=1.45pt];
  \fill (4,1.40) circle[radius=1.45pt];

  \node[anchor=north,font=\small] at (2,-.24)
    {The top and bottom edges are identified.};
  \node[anchor=north] at (2,-.75)
    {$z=\alpha\beta^{-1},\qquad
      \delta=\beta t_+\beta^{-1}t^{-1}=z^{-1}tzt^{-1}.$};
  \coordinate (figSW) at (current bounding box.south west);
  \coordinate (figNE) at (current bounding box.north east);
  \path[use as bounding box]
    ([xshift=-5pt,yshift=-5pt]figSW) rectangle
    ([xshift=5pt,yshift=5pt]figNE);
\end{tikzpicture}%
}
\caption{The decomposition of $Q \cong \Sigma_1^1$ into an annulus and a band. The horizontal edges of the
rectangle are identified. Both $\alpha$ and $\beta$ run from the left
point to the right, and $t,t_+$ are oriented in the same direction.}
\label{fig:annulus-band}
\end{figure}

To compute the inclusions of the two end surfaces, we return to
$P\x I$ before either surgery. We remove open tubular
neighborhoods of its two surgery curves and call the resulting
complement $D$. Now, Kirk's calculation for the cut model~\cite[Proof of Proposition~2.1]{Kirk} gives
\[
\pi_1(D)=\langle x,a,q,h\mid[x,a]=[q,h]=1\rangle,
\]
where $(a,x)$ and $(h,q)$ are the meridian--longitude pairs on the boundary tori of the two removed solid torus neighborhoods,
with longitudes given by the product framings.\footnote{
The local group and peripheral data can also be verified
using Kirby diagrams for these building blocks and torus surgeries,
as in~\cite[Section~2]{Akbulut}; we feel Kirk's calculations
in~\cite{Kirk} give a more accessible way to do this for most readers.}
The two end surfaces are $P\x\{0\}$ and
$P\x\{1\}$. Write $i_-$ and $i_+$ for the maps on fundamental
groups induced by their inclusions into $D$, using the base paths
in that calculation. With the same basis $(x,y)$ for the fiber fundamental group at the two
ends, the geometric inclusions give
\[
i_-\colon (x,y)\longmapsto(x,qa),\qquad
i_+\colon (x,y)\longmapsto(hx,q).
\]
The common circle direction is $t$. The surgeries impose
$ax^p=1$ and $h^m q^r=1$.
As in~\eqref{eq:kirk}, $p,r$ denote the coefficients of the chosen longitudes;
here each is a sign $\pm1$. Put $N=rm$, so $N=\pm m\ne0$. Then
we have $a=x^{-p}$, $q=h^{-N}$ and $\pi_1(H)=\F(x,h)$.
The two end surfaces of the original $P\x I$ remain in $\partial H$. Their inclusion
maps on fundamental groups, still denoted by $i_-$ and $i_+$, are
\[
\begin{aligned}
i_-(x)&=x,& i_-(y)&=h^{-N}x^{-p},\\
i_+(x)&=hx,& i_+(y)&=h^{-N}.
\end{aligned}
\]
For the fundamental group of the surgered piece with the band in $Q$
still removed, write
\[
U=\pi_1(H\x S_t^1)=\F(x,h)\x\langle t\rangle.
\]
The end maps into this group have images
\[
A_-=\langle x,h^N\rangle,\qquad A_+=\langle hx,h^N\rangle.
\]
The pair $(hx,h)$ is another free basis of $\F(x,h)$. The pairs $(x,h^N)$ and $(hx,h^N)$ freely generate $A_-$ and
$A_+$, respectively: since $N\ne0$, a nonempty reduced word in
either pair remains nontrivial when expanded in the corresponding
ambient basis $(x,h)$ or $(hx,h)$.
For the first end, the free basis $(x,yx^p)$ maps to
$(x,h^{-N})$; for the second, $(x,y)$ maps to $(hx,h^{-N})$.
Hence both end maps are injective.

They agree on the fiber boundary word:
\[
u=i_-([y,x^{-1}])=i_+([y,x^{-1}])=h^{-N}x^{-1}h^Nx.
\]
Since $N\ne0$, this word is nontrivial in $\F(x,h)$, so $u$
has infinite order.
The two end boundary circles cobound the annulus
$\partial P\x I$, which is disjoint from both surgeries.
With compatible base paths, this annulus shows that their boundary
loops give the same element of $\pi_1(H)$.

We now restore the band. Move the fiber basepoint to $\partial P$
along a path disjoint from the drilled curves, conjugating all
inclusions simultaneously. Besides the annular loop $t$, the band
supplies a second base loop $z$. These generate the base group on
the face $\partial P\x Q$.
Keeping the fiber basepoint fixed, view the paths $\alpha,\beta$
in $Q$ shown in Figure~\ref{fig:annulus-band} as paths in the quadrant.
Both run from the $P\x\{0\}$ end to the $P\x\{1\}$ end,
through the annulus and the band, respectively. Put
$z=\alpha\beta^{-1}$. If $t_+$ is the positive annular loop at the
other end, the untouched annulus at the fiber basepoint gives
$\alpha t_+\alpha^{-1}=t$. Following the base boundary
gives the based boundary loop
$\delta=\beta t_+\beta^{-1}t^{-1}=z^{-1}tzt^{-1}$.
The band is a product, so its attachment identifies the end groups by
$zi_-(f)z^{-1}=i_+(f)$ for $f\in\pi_1(P)$. Since we have shown
that $i_-$ and $i_+$ are injective,
Theorem~\ref{thm:whitehead} makes the resulting quadrant aspherical.

We now check the two boundary faces and the intersection of their
images. By van Kampen, the fundamental group of the quadrant is
\begin{equation}\label{eq:multiplicityHNN}
V=\left\langle x,h,t,z\ \middle|\
[t,x]=[t,h]=1,\quad zxz^{-1}=hx,\quad
zh^Nz^{-1}=(hx)^{-p}h^N\right\rangle.
\end{equation}
Theorem~\ref{thm:whitehead} also gives $U\hookrightarrow V$.
The presentation~\eqref{eq:multiplicityHNN} defines a surjective homomorphism
\[
\rho\colon V\longrightarrow\F(t,z),\qquad
\rho(x)=\rho(h)=1,\quad \rho(t)=t,\quad\rho(z)=z.
\]
Every defining relation maps to the identity. A relation between
$t,z$ in $V$ would also hold in $\F(t,z)$. Thus we see that $t,z$
generate a free subgroup of $V$, which we identify with $\F(t,z)$.
The word $\delta=z^{-1}tzt^{-1}$ and all its nonzero powers are
reduced and nonempty, so $\delta$ has infinite order in this free
group and hence in $V$.

First consider $P\x\partial Q$, whose fundamental group has image
\[
K=\langle A_-,\delta\rangle.
\]
The element $\delta$ commutes with $A_-$. Indeed, both end
groups lie in the factor $\F(x,h)$ of $U$, so $t$ commutes with
them. For $a\in A_-$, we have $zaz^{-1}\in A_+$, hence
$\delta a\delta^{-1}=z^{-1}t(zaz^{-1})t^{-1}z=a$; that is, $\delta$ and $a$ commute in $V$. If $a\delta^n=1$ in $V$, with $a\in A_-$, applying $\rho$
gives $\delta^n=1$ in $\F(t,z)$, so $n=0$. Then $a=1$, since
$A_-\le U$ injects into $V$. Hence
$K\cong A_-\x\langle\delta\rangle$, so the inclusion of this
face induces an injection on fundamental groups.

Next consider $\partial P\x Q$, whose fundamental group has image
\[
L=\langle u,t,z\rangle.
\]
Since $i_-([y,x^{-1}])=i_+([y,x^{-1}])=u$, the relation
$zi_-(f)z^{-1}=i_+(f)$, applied to $f=[y,x^{-1}]$,
gives $zuz^{-1}=u$. The loop $t$ also commutes with $u$, since
$u\in\F(x,h)$ and $U=\F(x,h)\x\langle t\rangle$.
Every element of $L$ can therefore be written $u^jv$, with
$v\in\F(t,z)$. If $u^jv=1$, applying $\rho$ gives $v=1$.
Since $u$ has infinite order in $U\le V$, we then have $j=0$.
Thus $L\cong\langle u\rangle\x\F(t,z)$, proving the second
face injection.

The same homomorphism $\rho$ determines the intersection. Write an
element of $K\cap L$ as $a\delta^n=u^jv$, with $a\in A_-$ and
$v\in\F(t,z)$. Applying $\rho$ gives $v=\delta^n$ in $\F(t,z)$,
and hence also in its image in $V$. Cancelling this common factor
gives $a=u^j$. Conversely, both $u$ and $\delta$ belong to both
face images. Thus $K\cap L=\langle u,\delta\rangle$, as required.
\end{proof}

When we glue these pieces, we must also check that the remaining
boundary injects. The calculation of the corner intersection  above is what
makes this possible. First, let $G=G_1*_E G_2$ be an amalgamated
product, where the two maps from $E$
are injective. If $L_i\le G_i$ satisfy
$L_i\cap E=C$, then
\begin{equation}\label{eq:subamalgam}
L_1*_C L_2\hookrightarrow G_1*_E G_2.
\end{equation}
A reduced word on the left remains reduced on the right, since
an element of $L_i$ lies in $E$ exactly when it lies in $C$.
For two quadrants glued along one product face, $E$ is the
gluing face group, $L_i$ are the groups of the remaining faces, and
$C$ is the corner subgroup. Van Kampen identifies the remaining
boundary group with $L_1*_C L_2$, so~\eqref{eq:subamalgam} proves
its injection into the glued piece.

The AP construction we will look at in Section~\ref{sec:ap} requires
one further check for the injection of the remaining boundary.
There, a connected gluing face meets one boundary piece in two components.
A loop in the boundary can enter this piece through one component
and leave through the other. If the intervening path can be homotoped
into the gluing face within the ambient piece, with endpoints fixed,
the two crossings can cancel after gluing, so such a loop may become
contractible in the ambient manifold. We must rule this out for paths
joining different intersection components. The following criterion
makes this path condition precise.

\begin{lem}[Boundary injectivity for multiple boundary identifications]
\label{lem:boundary}
Let $Z$ be obtained by gluing two connected manifolds $Z_1,Z_2$
along disjoint connected boundary faces $Y_1,\ldots,Y_r$, each
$\pi_1$-injective in both manifolds. Let $B$ be a connected union
of the remaining boundary faces. Suppose each connected piece
$A\subset Z_i$ of $B$ before gluing is $\pi_1$-injective in $Z_i$,
and each component of $A\cap Y_j$ is $\pi_1$-injective in $Y_j$.
For each such $A$ and each $Y_j$ meeting it, assume the following
path condition: if a path in $A$ has endpoints in $A\cap Y_j$
and can be homotoped into $Y_j$ within $Z_i$, fixing its endpoints,
then it can already be homotoped into $A\cap Y_j$ within $A$,
again fixing its endpoints  $(\star)$.
Under these assumptions, $\pi_1(B)\to\pi_1(Z)$ is injective.
\end{lem}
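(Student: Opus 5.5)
The plan is to model both $\pi_1(Z)$ and $\pi_1(B)$ as fundamental groups of graphs of groups and compare them with a normal form argument. By the standing hypotheses and Theorem~\ref{thm:whitehead}, $\pi_1(Z)$ is the fundamental group of the graph of groups $\mathcal G$ with two vertices, carrying $\pi_1(Z_1)$ and $\pi_1(Z_2)$, and $r$ edges, carrying $\pi_1(Y_1),\ldots,\pi_1(Y_r)$. One edge is a tree edge, giving an amalgam, and the remaining $r-1$ edges are stable letters, giving HNN extensions. All edge maps are injective. Likewise $B$ is built from its pieces $A\subset Z_i$ glued along the components of the $A\cap Y_j$. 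By van Kampen, $\pi_1(B)$ is the fundamental group of a graph of groups $\mathcal B$. Its vertices are the pieces $A$, with groups $\pi_1(A)$, and its edges are the components $C$ of the $A\cap Y_j$, with groups $\pi_1(C)$. The edge maps are injective: each $C$ injects into $Y_j$, $Y_j$ injects into $Z_i$, and the map $\pi_1(C)\to\pi_1(Z_i)$ factors through $\pi_1(A)$. The inclusion $B\subset Z$ induces a morphism of graphs of groups $\mathcal B\to\mathcal G$. It sends the vertex $A$ to $Z_i$, sends the edge $C$ to $Y_j$, and is injective on every vertex and edge group.

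Next I would apply the standard criterion for a morphism of graphs of groups to be $\pi_1$-injective, as in Bass's theory of coverings of graphs of groups, or equivalently via reduced words (Britton's lemma together with the normal form for amalgams). The criterion asks that the morphism be injective on vertex and edge groups, which we have. It also asks that it be \emph{locally injective} at each vertex. Fix a vertex $A$ and an index $j$, with edges $C_1,\ldots,C_s$ the components of $A\cap Y_j$. Choose base paths $\sigma_k$ in $A$ from the basepoint to $C_k$. Local injectivity requires the induced map
\[
\bigsqcup_k \pi_1(A)/\pi_1(C_k)\longrightarrow \pi_1(Z_i)/\pi_1(Y_j),
\]
with each piece of the domain conjugated by $\sigma_k$, to be injective. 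Concretely, if $g\in\pi_1(A)$ maps into $\sigma_k\pi_1(Y_j)\sigma_l^{-1}$, then $k=l$ and $g\in\sigma_k\pi_1(C_k)\sigma_k^{-1}$.

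The main step is translating this double coset condition into the path condition $(\star)$. An element $g$ as above is represented by a path $\gamma$ in $A$ running from a point of $C_k$ to a point of $C_l$. The statement that $g$ lies in $\sigma_k\pi_1(Y_j)\sigma_l^{-1}$ in $\pi_1(Z_i)$ says exactly that $\gamma$ is homotopic rel endpoints, within $Z_i$, to a path in $Y_j$. By $(\star)$, $\gamma$ is then homotopic rel endpoints within $A$ into $A\cap Y_j$. Since the endpoints are joined inside $A\cap Y_j$, they lie in one component, so $k=l$, and $g$ lies in the conjugated $\pi_1(C_k)$. This gives local injectivity at every vertex. I expect this step to be the main obstacle, because basepoints and conjugating paths must be handled consistently. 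The case $k\neq l$ is exactly the cancellation scenario described before the lemma, and $(\star)$ is designed to exclude it.

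Finally, I would conclude through reduced words. Take a reduced word in $\pi_1(\mathcal B)$, in which consecutive edge crossings are separated by vertex elements not lying in the relevant edge subgroup, or by an edge change. Its image in $\pi_1(\mathcal G)$ is again reduced. A vertex element outside the edge subgroup of $C_k$ stays outside $\pi_1(Y_j)$ by the $k=l$ case above. A backtrack through different components $C_k\neq C_l$ cannot become a pinch, by the $k\neq l$ case. By Britton's lemma and the amalgam normal form, reduced words of positive length are nontrivial. Words of length zero lie in the vertex groups $\pi_1(A)$, which inject into $\pi_1(Z_i)$ and hence into $\pi_1(Z)$ by Theorem~\ref{thm:whitehead}. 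Hence $\pi_1(B)\to\pi_1(Z)$ is injective, generalizing the single corner argument of~\eqref{eq:subamalgam}.
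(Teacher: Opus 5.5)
Your proposal is correct. It is the algebraic (Bass--Serre) form of the paper's argument rather than a genuinely different proof.

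The paper works geometrically:
\begin{enumerate}
\item It takes a loop in $B$ that is nullhomotopic in $Z$, and makes it transverse to the gluing faces with the fewest crossings in its homotopy class in $B$.
\item The normal form theorem then gives two consecutive opposite crossings of some $Y_j$. The intervening arc lies in one piece $A$ and is homotopic rel endpoints into $Y_j$ within $Z_i$.
\item Condition $(\star)$ moves that arc, within $A$, into a single component of $A\cap Y_j$. A push inside $B$ then removes both crossings, contradicting minimality.
\item Once there are no crossings, injectivity of $\pi_1(A)\to\pi_1(Z_i)\to\pi_1(Z)$ finishes the proof.
\end{enumerate}

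You instead encode $B\subset Z$ as a morphism of graphs of groups and recognize $(\star)$ as exactly Bass's local injectivity condition. The case $k\neq l$ excludes backtracks through distinct components. The case $k=l$ excludes vertex elements that fall into the edge group. Your claim that reduced words stay reduced is the algebraic version of the paper's claim that a minimal-crossing loop has no removable pinch. Your translation of the double-coset condition into $(\star)$, via the path $\sigma_k^{-1}g\sigma_l$, is right.

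What each route buys: yours makes the basepoint bookkeeping explicit and shows the lemma is purely a statement about graphs of groups, with no transversality or pushing. The paper's is shorter and needs no covering theory of graphs of groups.

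One correction. You should not credit Theorem~\ref{thm:whitehead} for the graph-of-groups description of $\pi_1(Z)$ or for the injection $\pi_1(Z_i)\hookrightarrow\pi_1(Z)$. That theorem assumes aspherical pieces, which this lemma does not. Both facts come from van Kampen and the normal form theorem for amalgams and HNN extensions with injective edge maps, which is what the paper uses.
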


\begin{proof}
Van Kampen's theorem and the normal form theorem for $\pi_1$
of glued spaces~\cite[Theorem~11]{Serre} give injections
$\pi_1(Z_i)\hookrightarrow\pi_1(Z)$.
Let $\alpha$ be a loop in $B$ which is nullhomotopic in $Z$.
Within its homotopy class in $B$, choose $\alpha$ transverse to
the gluing faces and minimizing the total number of intersection
points with them.

Suppose this number is positive.
The normal form theorem says that a nullhomotopic loop must
have two consecutive crossings of the same $Y_j$ in opposite
directions, with the intervening arc homotopic into $Y_j$
within one $Z_i$, fixing endpoints.
This arc lies in one boundary piece $A\subset Z_i$.
By condition $(\star)$, it can also
be homotoped within $A$ into $A\cap Y_j$, fixing endpoints.
The resulting path lies in a single component of $A\cap Y_j$.
A small push across that component, within $B$, then removes
both intersections, contradicting minimality.\footnote{This is a $1+3$ dimensional
analogue in a $4$--manifold of $1+1$ dimensional bigon removal for curves on
a surface. Here the disk traced by the homotopy need not be
embedded; the path condition ensures that the move can be
made within $B$.}

Thus $\alpha$ has no intersections with the gluing faces and
lies in a single piece $A$. Since $\alpha$ is nullhomotopic
in $Z$, and since the composition
$\pi_1(A)\to\pi_1(Z_i)\to\pi_1(Z)$ is injective,
$\alpha$ must already be nullhomotopic in $A$, and hence in $B$.
\end{proof}

\begin{figure}[tbp]
\centering
\begin{tikzpicture}[x=.82cm,y=.82cm,font=\small,
  remaining/.style={draw=blue!65!black,line width=1.5pt},
  gluing/.style={draw=orange!85!black,line width=1.8pt},
  intersection/.style={circle,fill=violet!80!black,draw=white,
    line width=.3pt,inner sep=2.2pt}]
  \filldraw[fill=gray!6,draw=gray!55,rounded corners=4pt]
    (0,0) rectangle (4,3);
  \filldraw[fill=gray!6,draw=gray!55,rounded corners=4pt]
    (7,0) rectangle (11,3);
  \draw[remaining,rounded corners=4pt]
    (4,2.85)--(3.85,3)--(.15,3)--(0,2.85)--(0,.15)
    --(.15,0)--(3.85,0)--(4,.15);
  \draw[remaining,rounded corners=4pt]
    (7,2.85)--(7.15,3)--(10.85,3)--(11,2.85)--(11,.15)
    --(10.85,0)--(7.15,0)--(7,.15);
  \draw[remaining] (4,2)--(4,1);
  \draw[remaining] (7,2)--(7,1);
  \draw[gluing] (4,2.85)--(4,2);
  \draw[gluing] (7,2.85)--(7,2);
  \draw[gluing] (4,1)--(4,.15);
  \draw[gluing] (7,1)--(7,.15);
  \draw[orange!85!black,dashed,<->,>=Stealth]
    (4.15,2.4)--node[above,fill=white,inner sep=2pt] {$Y_1$}(6.85,2.4);
  \draw[orange!85!black,dashed,<->,>=Stealth]
    (4.15,.6)--node[above,fill=white,inner sep=2pt] {$Y_2$}(6.85,.6);
  \node at (2,1.5) {$Z_1$};
  \node at (9,1.5) {$Z_2$};
  \node[blue!65!black,left] at (-.12,1.5) {$A_1$};
  \node[blue!65!black,right] at (11.12,1.5) {$A_2$};
  \node[blue!65!black,left] at (3.88,1.5) {$A_1$};
  \node[blue!65!black,right] at (7.12,1.5) {$A_2$};
  \node[intersection] at (4,2.85) {};
  \node[intersection] at (7,2.85) {};
  \node[intersection] at (4,2) {};
  \node[intersection] at (7,2) {};
  \node[intersection] at (4,1) {};
  \node[intersection] at (7,1) {};
  \node[intersection] at (4,.15) {};
  \node[intersection] at (7,.15) {};
  \node[violet!80!black,right] at (4.12,2.96) {$C_1$};
  \node[violet!80!black,left] at (6.88,2.96) {$C_1$};
  \node[violet!80!black,right] at (4.12,1.93) {$C_2$};
  \node[violet!80!black,left] at (6.88,1.93) {$C_2$};
  \node[violet!80!black,right] at (4.12,1.07) {$C_3$};
  \node[violet!80!black,left] at (6.88,1.07) {$C_3$};
  \node[violet!80!black,right] at (4.12,.02) {$C_4$};
  \node[violet!80!black,left] at (6.88,.02) {$C_4$};
  \draw[remaining] (.1,-.85)--(.7,-.85);
  \node[right] at (.8,-.85) {remaining boundary};
  \draw[gluing] (4.65,-.85)--(5.25,-.85);
  \node[right] at (5.35,-.85) {gluing faces};
  \node[intersection] at (8.45,-.85) {};
  \node[right] at (8.65,-.85) {intersection components};
\end{tikzpicture}
\caption{Schematic section for Lemma~\ref{lem:boundary}, with $B$ formed
from $A_1,A_2$ after the boundary identifications.}
\label{fig:boundarygluings}
\end{figure}
\begin{rmk}\label{rmk:boundarycriterion}
Once the hypotheses have been checked in both pieces,
we may compose a gluing map along $Y$ with a self-diffeomorphism that is the identity near $\partial Y$.
This changes how the interiors of the two copies of $Y$ are
identified, while their boundary collars are identified as before.
(These hypotheses in the lemma concern the individual pieces and their
boundary inclusions, which are unchanged by this modification.)
Thus Lemma~\ref{lem:boundary} and Theorem~\ref{thm:whitehead}
still give asphericity of the resulting space and injectivity of the remaining boundary,
whenever the pieces and gluing faces are aspherical.
The torus surgeries of multiplicity $1$ described above are regluings of this form.
\end{rmk}

\section{The constructions}\label{sec:constructions}

We now prove Theorem~\ref{thm:families}. The main ideas are all
illustrated by the FPS construction. The AP construction is handled
in essentially the same way, but its more involved model requires
us to describe the pieces and their boundary identifications in
greater detail.

We work with the original surgery models of~\cite{FPS,AP} but do not use the global fundamental group
calculations therein. Kirk's local complement calculation~\cite{Kirk}
supplies the end maps and peripheral data used in
Lemma~\ref{lem:multiplicity}. The remaining group calculations follow
from our bundle descriptions and the gluing lemmas of
Section~\ref{sec:gluing}.

\subsection{The Fintushel--Park--Stern construction}\label{sec:fps}

We begin with surgeries on $\Sigma_2\x\Sigma_2$. Cutting each
factor into punctured tori gives the local models of
Section~\ref{sec:gluing} which we refer to as quadrants below for obvious reasons.

We use the curves in Figure~\ref{fig:fps}. The pairs $(T_i,\lambda_i)$
of surgery tori and chosen longitudes from~\cite{FPS}, in the
order $i=1,\ldots,8$, are
\begin{equation}\label{eq:fpslink}
\begin{array}{@{}l@{\qquad}l@{\qquad}l@{\qquad}l@{}}
(a_1'\x c_1',\,a_1'),&
(b_1'\x c_1'',\,b_1'),&
(a_2'\x c_2',\,a_2'),&
(b_2'\x c_2'',\,b_2'),\\[5pt]
(a_2'\x c_1',\,c_1'),&
(a_2''\x d_1',\,d_1'),&
(a_1'\x c_2',\,c_2'),&
(a_1''\x d_2',\,d_2').
\end{array}
\end{equation}
Let $F_1$ denote the symplectic manifold obtained by the eight
Luttinger surgeries with surgery coefficients
$(-1,-1,-1,-1,1,1,1,1)$. For $m\ge1$, let $F_m$ be obtained by
changing only the eighth slope, from $\mu_8+\lambda_8$ to
$m\mu_8+\lambda_8$. Thus $m$ is the meridian exponent denoted $n+1$
in~\cite{FPS}; for $m>1$, this surgery is not Luttinger.
Choose a circle in each $\Sigma_2$ factor separating its two
handles, disjoint from the surgery curves and their parallel
copies. These circles divide the first factor into $P_1,P_2$
and the second into $Q_1,Q_2$, each a torus with one boundary
component, and hand us the four quadrants we'll work with.
For $F_1$, the surgered quadrants have the bundle descriptions in
Table~\ref{tab:quadrants}.

\begin{table}[htbp]
\caption{The surgered quadrants for $F_1$.}\label{tab:quadrants}
\centering
\renewcommand{\arraystretch}{1.2}
\begin{tabular*}{\textwidth}{@{}l@{\extracolsep{\fill}}cccc@{}}
quadrant&$P_1\x Q_1$&$P_1\x Q_2$&$P_2\x Q_1$&$P_2\x Q_2$\\[7pt]
surgery tori&$T_1,T_2$&$T_7,T_8$&$T_5,T_6$&$T_3,T_4$\\[7pt]
(fiber, base)&$(P_1,Q_1)$&$(Q_2,P_1)$&$(Q_1,P_2)$&$(P_2,Q_2)$
\end{tabular*}
\end{table}

In each quadrant, both chosen longitudes lie in the indicated fiber
and the two base curves are parallel. In the quadrants $P_1\x Q_2$
and $P_2\x Q_1$, we have simply exchanged the roles of the two
factors.

\begin{figure}[htbp]
\centering
\resizebox{.88\textwidth}{!}{
\begin{tikzpicture}[x=1bp,y=-1bp,line cap=butt,line join=miter,
  every node/.style={inner sep=0pt,outer sep=0pt,font=\fontsize{7}{8.4}\selectfont}]
  \path[use as bounding box] (0,0) rectangle (360,173);
  \clip (0,0) rectangle (360,173);
  \path[draw=black,line width=0.45bp]
    (61.06,12.996) -- (109.66,12.996) -- (158.26,61.596) -- (158.26,110.197) -- (109.66,158.796)
    (61.06,158.796) -- (109.66,158.796)
    (61.06,158.796) -- (12.46,110.197) -- (12.46,61.596)
    (37.66,36.396) -- (61.06,12.996)
    (12.46,61.596) -- (37.66,36.396)
    (12.46,93.996) -- (77.26,158.796)
    (93.46,12.996) -- (158.26,77.796)
    (28.66,45.396) -- (28.66,126.396)
    (142.06,45.396) -- (142.06,126.396)
    (12.46,77.796) -- (93.46,158.796);
  \path[draw=black,line width=0.45bp,dash pattern=on 2.7bp off 2.7bp]
    (12.46,61.596) -- (109.66,158.796)
    (61.06,12.996) -- (158.26,110.197);
  \path[draw=black,line width=0.45bp]
    (77.26,12.996) -- (158.26,93.996);
  \path[fill=black]
    (111.235,12.996)
    .. controls (111.235,13.743) and (110.63,14.346) .. (109.885,14.346)
    .. controls (109.139,14.346) and (108.535,13.743) .. (108.535,12.996)
    .. controls (108.535,12.251) and (109.139,11.646) .. (109.885,11.646)
    .. controls (110.63,11.646) and (111.235,12.251) .. (111.235,12.996);
  \path[fill=black]
    (62.41,158.796)
    .. controls (62.41,159.543) and (61.805,160.146) .. (61.06,160.146)
    .. controls (60.314,160.146) and (59.71,159.543) .. (59.71,158.796)
    .. controls (59.71,158.051) and (60.314,157.446) .. (61.06,157.446)
    .. controls (61.805,157.446) and (62.41,158.051) .. (62.41,158.796);
  \path[fill=black]
    (13.81,110.197)
    .. controls (13.81,110.943) and (13.205,111.547) .. (12.46,111.547)
    .. controls (11.714,111.547) and (11.11,110.943) .. (11.11,110.197)
    .. controls (11.11,109.451) and (11.714,108.847) .. (12.46,108.847)
    .. controls (13.205,108.847) and (13.81,109.451) .. (13.81,110.197);
  \path[fill=black]
    (13.585,61.596)
    .. controls (13.585,62.343) and (12.98,62.946) .. (12.235,62.946)
    .. controls (11.489,62.946) and (10.885,62.343) .. (10.885,61.596)
    .. controls (10.885,60.851) and (11.489,60.246) .. (12.235,60.246)
    .. controls (12.98,60.246) and (13.585,60.851) .. (13.585,61.596);
  \path[fill=black]
    (62.635,13.221)
    .. controls (62.635,13.968) and (62.03,14.571) .. (61.285,14.571)
    .. controls (60.539,14.571) and (59.935,13.968) .. (59.935,13.221)
    .. controls (59.935,12.476) and (60.539,11.871) .. (61.285,11.871)
    .. controls (62.03,11.871) and (62.635,12.476) .. (62.635,13.221);
  \path[fill=black]
    (159.61,110.422)
    .. controls (159.61,111.168) and (159.005,111.772) .. (158.26,111.772)
    .. controls (157.514,111.772) and (156.91,111.168) .. (156.91,110.422)
    .. controls (156.91,109.676) and (157.514,109.072) .. (158.26,109.072)
    .. controls (159.005,109.072) and (159.61,109.676) .. (159.61,110.422);
  \path[fill=black]
    (159.61,61.596)
    .. controls (159.61,62.343) and (159.005,62.946) .. (158.26,62.946)
    .. controls (157.514,62.946) and (156.91,62.343) .. (156.91,61.596)
    .. controls (156.91,60.851) and (157.514,60.246) .. (158.26,60.246)
    .. controls (159.005,60.246) and (159.61,60.851) .. (159.61,61.596);
  \path[fill=black]
    (111.01,158.796)
    .. controls (111.01,159.543) and (110.405,160.146) .. (109.66,160.146)
    .. controls (108.914,160.146) and (108.31,159.543) .. (108.31,158.796)
    .. controls (108.31,158.051) and (108.914,157.446) .. (109.66,157.446)
    .. controls (110.405,157.446) and (111.01,158.051) .. (111.01,158.796);
  \path[fill=black]
    (85.071,158.796) -- (86.057,160.418) -- (86.002,160.454) -- (83.358,159.404)
    .. controls (82.437,159.202) and (81.516,159) .. (80.595,158.796)
    .. controls (81.516,158.594) and (82.437,158.391) .. (83.358,158.189) -- (86.002,157.138) -- (86.057,157.166) -- (85.071,158.796);
  \path[draw=black,line width=0.45bp]
    (85.36,158.796)
    .. controls (84.897,158.796) and (84.612,158.796) .. (84.612,158.796);
  \path[fill=black]
    (85.071,158.796) -- (86.057,160.418) -- (86.002,160.454) -- (83.358,159.404)
    .. controls (82.437,159.202) and (81.516,159) .. (80.595,158.796)
    .. controls (81.516,158.594) and (82.437,158.391) .. (83.358,158.189) -- (86.002,157.138) -- (86.057,157.166) -- (85.071,158.796);
  \path[fill=black]
    (84.901,12.996) -- (83.916,11.375) -- (83.97,11.339) -- (86.614,12.389)
    .. controls (87.535,12.591) and (88.457,12.794) .. (89.378,12.996)
    .. controls (88.457,13.2) and (87.535,13.402) .. (86.614,13.604) -- (83.97,14.655) -- (83.916,14.627) -- (84.901,12.996);
  \path[draw=black,line width=0.45bp]
    (84.612,12.996)
    .. controls (85.075,12.996) and (85.36,12.996) .. (85.36,12.996);
  \path[fill=black]
    (84.901,12.996) -- (83.916,11.375) -- (83.97,11.339) -- (86.614,12.389)
    .. controls (87.535,12.591) and (88.457,12.794) .. (89.378,12.996)
    .. controls (88.457,13.2) and (87.535,13.402) .. (86.614,13.604) -- (83.97,14.655) -- (83.916,14.627) -- (84.901,12.996);
  \path[fill=black]
    (37.084,36.972) -- (38.928,37.421) -- (38.915,37.486) -- (36.303,38.614)
    .. controls (35.509,39.122) and (34.714,39.63) .. (33.919,40.137)
    .. controls (34.427,39.343) and (34.935,38.548) .. (35.443,37.753) -- (36.57,35.141) -- (36.629,35.122) -- (37.084,36.972);
  \path[draw=black,line width=0.45bp]
    (37.92,36.137)
    .. controls (37.202,36.855) and (36.76,37.296) .. (36.76,37.296);
  \path[fill=black]
    (37.084,36.972) -- (38.928,37.421) -- (38.915,37.486) -- (36.303,38.614)
    .. controls (35.509,39.122) and (34.714,39.63) .. (33.919,40.137)
    .. controls (34.427,39.343) and (34.935,38.548) .. (35.443,37.753) -- (36.57,35.141) -- (36.629,35.122) -- (37.084,36.972);
  \path[fill=black]
    (130.114,33.451) -- (130.564,31.607) -- (130.629,31.62) -- (131.755,34.232)
    .. controls (132.264,35.027) and (132.771,35.821) .. (133.28,36.616)
    .. controls (132.485,36.108) and (131.69,35.6) .. (130.895,35.092) -- (128.284,33.965) -- (128.264,33.907) -- (130.114,33.451);
  \path[draw=black,line width=0.45bp]
    (129.91,33.246)
    .. controls (130.238,33.574) and (130.439,33.775) .. (130.439,33.775);
  \path[fill=black]
    (130.114,33.451) -- (130.564,31.607) -- (130.629,31.62) -- (131.755,34.232)
    .. controls (132.264,35.027) and (132.771,35.821) .. (133.28,36.616)
    .. controls (132.485,36.108) and (131.69,35.6) .. (130.895,35.092) -- (128.284,33.965) -- (128.264,33.907) -- (130.114,33.451);
  \path[fill=black]
    (37.085,134.821) -- (36.636,136.664) -- (36.571,136.652) -- (35.443,134.04)
    .. controls (34.934,133.246) and (34.426,132.452) .. (33.917,131.657)
    .. controls (34.713,132.164) and (35.507,132.672) .. (36.302,133.18) -- (38.915,134.305) -- (38.934,134.364) -- (37.085,134.821);
  \path[draw=black,line width=0.45bp]
    (37.289,135.026)
    .. controls (36.962,134.698) and (36.76,134.496) .. (36.76,134.496);
  \path[fill=black]
    (37.085,134.821) -- (36.636,136.664) -- (36.571,136.652) -- (35.443,134.04)
    .. controls (34.934,133.246) and (34.426,132.452) .. (33.917,131.657)
    .. controls (34.713,132.164) and (35.507,132.672) .. (36.302,133.18) -- (38.915,134.305) -- (38.934,134.364) -- (37.085,134.821);
  \path[fill=black]
    (12.46,86.186) -- (14.081,85.2) -- (14.118,85.255) -- (13.068,87.898)
    .. controls (12.866,88.82) and (12.663,89.741) .. (12.46,90.662)
    .. controls (12.257,89.741) and (12.055,88.82) .. (11.852,87.898) -- (10.802,85.255) -- (10.83,85.2) -- (12.46,86.186);
  \path[draw=black,line width=0.45bp]
    (12.46,85.896)
    .. controls (12.46,86.359) and (12.46,86.645) .. (12.46,86.645);
  \path[fill=black]
    (12.46,86.186) -- (14.081,85.2) -- (14.118,85.255) -- (13.068,87.898)
    .. controls (12.866,88.82) and (12.663,89.741) .. (12.46,90.662)
    .. controls (12.257,89.741) and (12.055,88.82) .. (11.852,87.898) -- (10.802,85.255) -- (10.83,85.2) -- (12.46,86.186);
  \path[fill=black]
    (158.26,85.607) -- (156.639,86.593) -- (156.602,86.538) -- (157.652,83.895)
    .. controls (157.855,82.973) and (158.057,82.052) .. (158.26,81.131)
    .. controls (158.463,82.052) and (158.665,82.973) .. (158.868,83.895) -- (159.918,86.538) -- (159.89,86.593) -- (158.26,85.607);
  \path[draw=black,line width=0.45bp]
    (158.26,85.896)
    .. controls (158.26,85.434) and (158.26,85.148) .. (158.26,85.148);
  \path[fill=black]
    (158.26,85.607) -- (156.639,86.593) -- (156.602,86.538) -- (157.652,83.895)
    .. controls (157.855,82.973) and (158.057,82.052) .. (158.26,81.131)
    .. controls (158.463,82.052) and (158.665,82.973) .. (158.868,83.895) -- (159.918,86.538) -- (159.89,86.593) -- (158.26,85.607);
  \path[fill=black]
    (133.636,134.821) -- (131.792,134.372) -- (131.805,134.307) -- (134.417,133.18)
    .. controls (135.211,132.672) and (136.006,132.163) .. (136.801,131.655)
    .. controls (136.293,132.45) and (135.785,133.245) .. (135.277,134.039) -- (134.15,136.652) -- (134.091,136.671) -- (133.636,134.821);
  \path[draw=black,line width=0.45bp]
    (133.431,135.026)
    .. controls (133.759,134.698) and (133.96,134.496) .. (133.96,134.496);
  \path[fill=black]
    (133.636,134.821) -- (131.792,134.372) -- (131.805,134.307) -- (134.417,133.18)
    .. controls (135.211,132.672) and (136.006,132.163) .. (136.801,131.655)
    .. controls (136.293,132.45) and (135.785,133.245) .. (135.277,134.039) -- (134.15,136.652) -- (134.091,136.671) -- (133.636,134.821);
  \path[draw=black,line width=0.45bp]
    (246.46,14.037) -- (295.06,14.037) -- (343.66,62.637) -- (343.66,111.237) -- (295.06,159.837)
    (246.46,159.837) -- (295.06,159.837)
    (246.46,159.837) -- (197.86,111.237) -- (197.86,62.637)
    (223.06,37.437) -- (246.46,14.037)
    (197.86,62.637) -- (223.06,37.437)
    (197.86,95.037) -- (262.66,159.837)
    (278.86,14.037) -- (343.66,78.837)
    (214.06,46.437) -- (214.06,127.437)
    (327.46,46.437) -- (327.46,127.437)
    (197.86,78.837) -- (278.86,159.837);
  \path[draw=black,line width=0.45bp,dash pattern=on 2.7bp off 2.7bp]
    (197.86,62.637) -- (295.06,159.837)
    (246.46,14.037) -- (343.66,111.237);
  \path[draw=black,line width=0.45bp]
    (262.66,14.037) -- (343.66,95.037);
  \path[fill=black]
    (296.635,14.037)
    .. controls (296.635,14.782) and (296.032,15.387) .. (295.285,15.387)
    .. controls (294.54,15.387) and (293.935,14.782) .. (293.935,14.037)
    .. controls (293.935,13.291) and (294.54,12.687) .. (295.285,12.687)
    .. controls (296.032,12.687) and (296.635,13.291) .. (296.635,14.037);
  \path[fill=black]
    (247.81,159.837)
    .. controls (247.81,160.582) and (247.207,161.187) .. (246.46,161.187)
    .. controls (245.715,161.187) and (245.11,160.582) .. (245.11,159.837)
    .. controls (245.11,159.09) and (245.715,158.487) .. (246.46,158.487)
    .. controls (247.207,158.487) and (247.81,159.09) .. (247.81,159.837);
  \path[fill=black]
    (199.21,111.237)
    .. controls (199.21,111.983) and (198.607,112.587) .. (197.86,112.587)
    .. controls (197.115,112.587) and (196.51,111.983) .. (196.51,111.237)
    .. controls (196.51,110.491) and (197.115,109.887) .. (197.86,109.887)
    .. controls (198.607,109.887) and (199.21,110.491) .. (199.21,111.237);
  \path[fill=black]
    (198.985,62.637)
    .. controls (198.985,63.382) and (198.382,63.987) .. (197.635,63.987)
    .. controls (196.89,63.987) and (196.285,63.382) .. (196.285,62.637)
    .. controls (196.285,61.891) and (196.89,61.287) .. (197.635,61.287)
    .. controls (198.382,61.287) and (198.985,61.891) .. (198.985,62.637);
  \path[fill=black]
    (248.035,14.262)
    .. controls (248.035,15.007) and (247.432,15.612) .. (246.685,15.612)
    .. controls (245.94,15.612) and (245.335,15.007) .. (245.335,14.262)
    .. controls (245.335,13.516) and (245.94,12.912) .. (246.685,12.912)
    .. controls (247.432,12.912) and (248.035,13.516) .. (248.035,14.262);
  \path[fill=black]
    (345.01,111.462)
    .. controls (345.01,112.208) and (344.407,112.812) .. (343.66,112.812)
    .. controls (342.915,112.812) and (342.31,112.208) .. (342.31,111.462)
    .. controls (342.31,110.716) and (342.915,110.112) .. (343.66,110.112)
    .. controls (344.407,110.112) and (345.01,110.716) .. (345.01,111.462);
  \path[fill=black]
    (345.01,62.637)
    .. controls (345.01,63.382) and (344.407,63.987) .. (343.66,63.987)
    .. controls (342.915,63.987) and (342.31,63.382) .. (342.31,62.637)
    .. controls (342.31,61.891) and (342.915,61.287) .. (343.66,61.287)
    .. controls (344.407,61.287) and (345.01,61.891) .. (345.01,62.637);
  \path[fill=black]
    (296.41,159.837)
    .. controls (296.41,160.582) and (295.807,161.187) .. (295.06,161.187)
    .. controls (294.315,161.187) and (293.71,160.582) .. (293.71,159.837)
    .. controls (293.71,159.09) and (294.315,158.487) .. (295.06,158.487)
    .. controls (295.807,158.487) and (296.41,159.09) .. (296.41,159.837);
  \path[fill=black]
    (270.471,159.837) -- (271.457,161.458) -- (271.402,161.494) -- (268.758,160.445)
    .. controls (267.837,160.242) and (266.916,160.04) .. (265.995,159.837)
    .. controls (266.916,159.634) and (267.837,159.431) .. (268.758,159.229) -- (271.402,158.178) -- (271.457,158.207) -- (270.471,159.837);
  \path[draw=black,line width=0.45bp]
    (270.76,159.837)
    .. controls (270.297,159.837) and (270.012,159.837) .. (270.012,159.837);
  \path[fill=black]
    (270.471,159.837) -- (271.457,161.458) -- (271.402,161.494) -- (268.758,160.445)
    .. controls (267.837,160.242) and (266.916,160.04) .. (265.995,159.837)
    .. controls (266.916,159.634) and (267.837,159.431) .. (268.758,159.229) -- (271.402,158.178) -- (271.457,158.207) -- (270.471,159.837);
  \path[fill=black]
    (270.302,14.037) -- (269.316,12.416) -- (269.371,12.379) -- (272.014,13.429)
    .. controls (272.935,13.632) and (273.856,13.834) .. (274.778,14.037)
    .. controls (273.856,14.239) and (272.935,14.442) .. (272.014,14.645) -- (269.371,15.695) -- (269.316,15.667) -- (270.302,14.037);
  \path[draw=black,line width=0.45bp]
    (270.012,14.037)
    .. controls (270.475,14.037) and (270.76,14.037) .. (270.76,14.037);
  \path[fill=black]
    (270.302,14.037) -- (269.316,12.416) -- (269.371,12.379) -- (272.014,13.429)
    .. controls (272.935,13.632) and (273.856,13.834) .. (274.778,14.037)
    .. controls (273.856,14.239) and (272.935,14.442) .. (272.014,14.645) -- (269.371,15.695) -- (269.316,15.667) -- (270.302,14.037);
  \path[fill=black]
    (222.484,38.012) -- (224.329,38.461) -- (224.316,38.526) -- (221.704,39.653)
    .. controls (220.909,40.161) and (220.115,40.67) .. (219.32,41.178)
    .. controls (219.828,40.384) and (220.336,39.588) .. (220.844,38.794) -- (221.97,36.181) -- (222.029,36.162) -- (222.484,38.012);
  \path[draw=black,line width=0.45bp]
    (223.32,37.177)
    .. controls (222.602,37.895) and (222.16,38.337) .. (222.16,38.337);
  \path[fill=black]
    (222.484,38.012) -- (224.329,38.461) -- (224.316,38.526) -- (221.704,39.653)
    .. controls (220.909,40.161) and (220.115,40.67) .. (219.32,41.178)
    .. controls (219.828,40.384) and (220.336,39.588) .. (220.844,38.794) -- (221.97,36.181) -- (222.029,36.162) -- (222.484,38.012);
  \path[fill=black]
    (315.515,34.491) -- (315.964,32.648) -- (316.029,32.661) -- (317.156,35.272)
    .. controls (317.664,36.067) and (318.172,36.862) .. (318.68,37.657)
    .. controls (317.886,37.148) and (317.091,36.64) .. (316.296,36.132) -- (313.684,35.005) -- (313.665,34.946) -- (315.515,34.491);
  \path[draw=black,line width=0.45bp]
    (315.31,34.287)
    .. controls (315.638,34.614) and (315.839,34.816) .. (315.839,34.816);
  \path[fill=black]
    (315.515,34.491) -- (315.964,32.648) -- (316.029,32.661) -- (317.156,35.272)
    .. controls (317.664,36.067) and (318.172,36.862) .. (318.68,37.657)
    .. controls (317.886,37.148) and (317.091,36.64) .. (316.296,36.132) -- (313.684,35.005) -- (313.665,34.946) -- (315.515,34.491);
  \path[fill=black]
    (222.485,135.861) -- (222.035,137.705) -- (221.971,137.692) -- (220.844,135.08)
    .. controls (220.336,134.285) and (219.827,133.491) .. (219.32,132.695)
    .. controls (220.114,133.203) and (220.909,133.712) .. (221.703,134.22) -- (224.316,135.347) -- (224.335,135.405) -- (222.485,135.861);
  \path[draw=black,line width=0.45bp]
    (222.689,136.065)
    .. controls (222.362,135.738) and (222.16,135.537) .. (222.16,135.537);
  \path[fill=black]
    (222.485,135.861) -- (222.035,137.705) -- (221.971,137.692) -- (220.844,135.08)
    .. controls (220.336,134.285) and (219.827,133.491) .. (219.32,132.695)
    .. controls (220.114,133.203) and (220.909,133.712) .. (221.703,134.22) -- (224.316,135.347) -- (224.335,135.405) -- (222.485,135.861);
  \path[fill=black]
    (197.86,87.225) -- (199.482,86.24) -- (199.518,86.295) -- (198.468,88.939)
    .. controls (198.266,89.86) and (198.063,90.782) .. (197.86,91.702)
    .. controls (197.657,90.782) and (197.455,89.86) .. (197.252,88.939) -- (196.202,86.295) -- (196.23,86.24) -- (197.86,87.225);
  \path[draw=black,line width=0.45bp]
    (197.86,86.937)
    .. controls (197.86,87.4) and (197.86,87.685) .. (197.86,87.685);
  \path[fill=black]
    (197.86,87.225) -- (199.482,86.24) -- (199.518,86.295) -- (198.468,88.939)
    .. controls (198.266,89.86) and (198.063,90.782) .. (197.86,91.702)
    .. controls (197.657,90.782) and (197.455,89.86) .. (197.252,88.939) -- (196.202,86.295) -- (196.23,86.24) -- (197.86,87.225);
  \path[fill=black]
    (343.66,86.648) -- (342.039,87.634) -- (342.002,87.578) -- (343.052,84.934)
    .. controls (343.255,84.014) and (343.457,83.093) .. (343.66,82.171)
    .. controls (343.863,83.093) and (344.065,84.014) .. (344.268,84.934) -- (345.319,87.578) -- (345.291,87.634) -- (343.66,86.648);
  \path[draw=black,line width=0.45bp]
    (343.66,86.937)
    .. controls (343.66,86.473) and (343.66,86.189) .. (343.66,86.189);
  \path[fill=black]
    (343.66,86.648) -- (342.039,87.634) -- (342.002,87.578) -- (343.052,84.934)
    .. controls (343.255,84.014) and (343.457,83.093) .. (343.66,82.171)
    .. controls (343.863,83.093) and (344.065,84.014) .. (344.268,84.934) -- (345.319,87.578) -- (345.291,87.634) -- (343.66,86.648);
  \path[fill=black]
    (319.036,135.861) -- (317.192,135.412) -- (317.205,135.347) -- (319.817,134.22)
    .. controls (320.612,133.712) and (321.406,133.203) .. (322.201,132.695)
    .. controls (321.694,133.49) and (321.185,134.285) .. (320.677,135.08) -- (319.55,137.692) -- (319.491,137.711) -- (319.036,135.861);
  \path[draw=black,line width=0.45bp]
    (318.832,136.065)
    .. controls (319.159,135.738) and (319.36,135.537) .. (319.36,135.537);
  \path[fill=black]
    (319.036,135.861) -- (317.192,135.412) -- (317.205,135.347) -- (319.817,134.22)
    .. controls (320.612,133.712) and (321.406,133.203) .. (322.201,132.695)
    .. controls (321.694,133.49) and (321.185,134.285) .. (320.677,135.08) -- (319.55,137.692) -- (319.491,137.711) -- (319.036,135.861);
  \path[draw=black,line width=0.45bp]
    (173.11,81.396) -- (185.26,93.546)
    (185.26,81.396) -- (173.11,93.546);
  \node at (32.418,61.43) {$b_1'$};
  \node at (86.237,165.605) {$b_1$};
  \node at (110.695,54.207) {$a_2''$};
  \node at (54.445,112.888) {$a_1''$};
  \node at (135.018,31.212) {$a_2$};
  \node at (31.653,30.11) {$a_1$};
  \node at (47.538,123.237) {$a_1'$};
  \node at (30.753,141.935) {$a_1$};
  \node at (86.26,7.475) {$b_2$};
  \node at (165.235,86.112) {$b_2$};
  \node at (119.335,45.815) {$a_2'$};
  \node at (5.26,85.73) {$b_1$};
  \node at (140.643,138.987) {$a_2$};
  \node at (136.435,109.625) {$b_2'$};
  \node at (57.438,164.052) {$x$};
  \node at (114.812,10.602) {$x$};
  \node at (56.988,10.602) {$x$};
  \node at (7.488,59.653) {$x$};
  \node at (162.738,58.977) {$x$};
  \node at (8.613,114.102) {$x$};
  \node at (161.838,114.553) {$x$};
  \node at (111.213,164.502) {$x$};
  \node at (217.885,62.465) {$d_1'$};
  \node at (271.637,166.662) {$d_1$};
  \node at (295.847,55.242) {$c_2''$};
  \node at (240.07,113.472) {$c_1''$};
  \node at (320.417,32.247) {$c_2$};
  \node at (217.052,30.853) {$c_1$};
  \node at (232.578,124.138) {$c_1'$};
  \node at (216.377,142.7) {$c_1$};
  \node at (271.66,8.51) {$d_2$};
  \node at (350.185,86.99) {$d_2$};
  \node at (304.735,47.3) {$c_2'$};
  \node at (190.66,86.765) {$d_1$};
  \node at (326.042,139.865) {$c_2$};
  \node at (321.835,110.66) {$d_2'$};
  \node at (242.86,165.74) {$y$};
  \node at (300.235,12.29) {$y$};
  \node at (242.41,12.29) {$y$};
  \node at (192.91,61.34) {$y$};
  \node at (348.16,60.665) {$y$};
  \node at (194.035,115.79) {$y$};
  \node at (347.26,116.24) {$y$};
  \node at (296.635,166.19) {$y$};
\end{tikzpicture}%
}
\caption{The two $\Sigma_2$ factors of the product and the parallel copies of the curves specifying
 the FPS surgery tori~\cite[Figure~2]{FPS}. The curves with subscript $i$ lie in the
 punctured torus $P_i$, respectively $Q_i$, in the quadrant decomposition.}
\label{fig:fps}
\end{figure}

\begin{proof}[The FPS case of Theorem~\ref{thm:families}]
We first treat $F_1$. Let $Z_{ij}$ denote the surgered quadrants.
Lemma~\ref{lem:product}
applies to all four. Form the two rows
\[
R_i=Z_{i1}\cup_{P_i\x S^1}Z_{i2}.
\]
The common product faces are aspherical and inject into the quadrants,
so Whitehead's theorem makes each row aspherical. Its remaining
boundary is
\[
\partial R_i=\partial P_i\x\Sigma_2
=(\partial P_i\x Q_1)\cup_{T^2}(\partial P_i\x Q_2).
\]
The intersection calculation~\eqref{eq:corner}, using the face roles
in Table~\ref{tab:quadrants}, and~\eqref{eq:subamalgam} show that
$\pi_1(\partial R_i)$ injects into $\pi_1(R_i)$. A second application
of Whitehead to
\[
F_1=R_1\cup_{S^1\x\Sigma_2}R_2
\]
proves asphericity. The gluing here is the original product gluing,
which no surgery has changed. In particular, the separating
$S^1\x\Sigma_2$ remains $\pi_1$-injective in the closed manifold.

For $F_m$, only $Z_{12}$ changes with $m$. In this quadrant the fiber is
$Q_2$, the two fiber curves $c_2,d_2$ intersect once, and the
parallel base curves lie in $P_1$. The surgery on $T_7$ remains a
surgery of multiplicity $1$, while that on $T_8$ has slope $m\mu_8+\lambda_8$.
Lemma~\ref{lem:multiplicity} supplies exactly the two face
injections and the corner intersection used above. The same two
gluings therefore prove that $F_m$ is aspherical and that the
separating $S^1\x\Sigma_2$ remains $\pi_1$-injective.

The homology calculation for $F_m$ appears in~\cite{FPS}. We include
these quick arguments to make the note self-contained. Let $W$ be the
complement of the eight original tori in
$Y=\Sigma_2\x\Sigma_2$. Excision gives
\[
H_2(Y;\Z)\longrightarrow\Z^8\longrightarrow H_1(W;\Z)
\longrightarrow H_1(Y;\Z)\longrightarrow0.
\]
The first map records intersection numbers. In the order
of Lagrangian tori in \eqref{eq:fpslink}, dual tori classes are, up to signs,
\[
b_1 \x d_1 ,\ a_1\mathbin{\x}d_1,\ b_2\mathbin{\x}d_2,\ a_2\mathbin{\x}d_2,\
b_2\mathbin{\x}d_1,\ b_2\mathbin{\x}c_1,\ b_1\mathbin{\x}d_2,\ b_1\mathbin{\x}c_2.
\]
Each pairs with just its corresponding surgery class. Thus
$H_1(W;\Z)=\Z^8$, and the meridians vanish in this group. The eight
chosen longitudes represent the basis
$a_1,b_1,a_2,b_2,c_1,d_1,c_2,d_2$. Each longitude still has
coefficient $\pm1$, so $H_1(F_m;\Z)=0$ for every $m$.
The surgeries preserve Euler characteristic $4$ and signature $0$.
The two surviving product surfaces have square zero and intersection
one, and give a hyperbolic basis for $H_2(F_m;\Z)$. Thus $F_m$ has
the integral homology and intersection form of $S^2\x S^2$.
\end{proof}

\begin{rmk}\label{rmk:otherfaces}
We can equally well first glue along the other pair of
product faces. Thus, if the two surgered pieces $P_i\x Q$
of $\Sigma_2\x Q$ satisfy Lemma~\ref{lem:product} or
Lemma~\ref{lem:multiplicity}, the resulting piece is aspherical
and its boundary $\Sigma_2\x\partial Q$ is $\pi_1$-injective.
This follows from Theorem~\ref{thm:whitehead} and~\eqref{eq:subamalgam}.
\end{rmk}

\subsection{The Akhmedov--Park construction}\label{sec:ap}

Here the model, the Hacon--Pardini surface~\cite{HP}, is a
$\Sigma_2$--bundle over $\Sigma_2$ with nontrivial monodromy.
Writing $S=\Sigma_2$, it is the quotient
\[
X=(S\x\Sigma_3)/\langle(\tau,\tau_2)\rangle
\longrightarrow\Sigma_2
\]
where $\tau\colon S\to S$ is an involution with two fixed points and
$\tau_2\colon\Sigma_3\to\Sigma_3$ is a free involution~\cite{AP}.
This bundle has a fairly simple monodromy
\[ [\id, \id] [\tau, \id]= \id \quad\text{in }\Mod(S).
\]
Split the base into $Q_i \cong \Sigma_1^1$, $i=1,2$, so that the bundle over $Q_1$ is trivial; over $Q_2$, the generators
$c=\widetilde c_2$ and $d=d_2$ act by $\tau$ and the identity, respectively,  and the two bundles  are identified along their common boundary $S\x S^1$. 

The six pairs $(T_i,\lambda_i)$ of tori and chosen longitudes
in~\cite{AP}, in the order $i=1,\ldots,6$, are
\begin{equation}\label{eq:aplink}
\begin{array}{@{}l@{\qquad}l@{\qquad}l@{}}
\bigl(q(a_1'\x c_1'),\,a_1'\bigr),&
\bigl(q(b_1'\x c_1''),\,b_1'\bigr),&
\bigl(q(a_2'\x c_1'),\,c_1'\bigr),\\[5pt]
\bigl(q(a_2''\x d_1'),\,d_1'\bigr),&
\bigl(q(b_1''\x d_2'),\,d_2'\bigr),&
\bigl(q((\widetilde b_1\widetilde b_2)\x\widetilde c_2'),\,\beta\bigr).
\end{array}
\end{equation}
Here $q$ is the quotient map and $\beta$ is the curve
in~\cite[Equation~(8)]{AP}. The sixth torus is obtained from the cylinder
$(\widetilde b_1\widetilde b_2)\x\widetilde c_2'$
by identifying its two ends using $\tau$.
We use the notation $M_m^1$, $m\ge1$, from
\cite{AP}, with $p=1$. The symplectic member $M_1^1$ is
obtained by the six Luttinger surgeries with slopes $\mu_i-\lambda_i$.
In $M_m^1$, only the first changes, to $m\mu_1-\lambda_1$.

\begin{proof}[The AP case of Theorem~\ref{thm:families}]
We first treat $M_1^1$. The four surgeries over $Q_1$ give the FPS
local models after splitting $S$ into punctured tori and exchanging
factors in one piece. Lemma~\ref{lem:product} and
Remark~\ref{rmk:otherfaces} show that the surgered piece $E_1$
is aspherical with $\pi_1$-injective boundary $S\x\partial Q_1$.

For the two surgeries over $Q_2$, we decompose the fiber $S$ into
two $\tau$-invariant pairs of pants $P_+,P_-$, as shown in
Figure~\ref{fig:ap-pants}. The cutting curves are
\[
\eta_0=b_1'',\qquad \eta_1=\tau(b_1''),\qquad
\zeta=\widetilde b_1\widetilde b_2.
\]
In the octagon, $\eta_0,\eta_1$ cut off the northwest and southeast
corners, while the two arcs $\zeta_1,\zeta_2$ of $\zeta$ cut off the
other two.
The central region has two pairs of intervals identified, three
boundary circles $\eta_0,\eta_1,\zeta$ and Euler characteristic $-1$.
Its complement is connected through $z_0$ and has the same boundary
circles and Euler characteristic. Thus the two regions give
$P_+,P_-$, each preserved by the half turn $\tau$. It fixes
$z_1,z_0$, respectively, exchanges $\eta_0,\eta_1$ and acts freely
on $\zeta$.

\begin{figure}[htbp]
\centering
\definecolor{curveblue}{RGB}{0,105,170}
\definecolor{curveorange}{RGB}{196,110,20}
\definecolor{pathpurple}{RGB}{117,77,143}
\tikzset{
  edge/.style={line width=.5pt,draw=black!65},
  curve/.style={line width=1pt,line cap=round},
  arrow/.style={postaction={decorate},decoration={markings,
    mark=at position .5 with {\arrow{Stealth[length=4pt,width=3.4pt]}}}}
}
\resizebox{.76\textwidth}{!}{%
\begin{tikzpicture}[x=1.20cm,y=1.20cm,font=\normalsize]
  \coordinate (A) at (-1,2.4);
  \coordinate (B) at (1,2.4);
  \coordinate (C) at (2.4,1);
  \coordinate (D) at (2.4,-1);
  \coordinate (E) at (1,-2.4);
  \coordinate (F) at (-1,-2.4);
  \coordinate (G) at (-2.4,-1);
  \coordinate (H) at (-2.4,1);
  \path[fill=black!2] (A)--(B)--(C)--(D)--(E)--(F)--(G)--(H)--cycle;
  \path[fill=curveblue!7]
    (.15,2.4)--(.75,2.4)--(2.4,.75)--(2.4,.15)
    --(-.15,-2.4)--(-.75,-2.4)--(-2.4,-.75)--(-2.4,-.15)--cycle;
  \draw[edge,arrow] (B)--(A) node[midway,above=5pt] {$a_1$};
  \draw[edge,arrow] (A)--(H) node[midway,above left=3pt] {$b_1$};
  \draw[edge,arrow] (G)--(H) node[midway,left=6pt] {$a_1$};
  \draw[edge,arrow] (F)--(G) node[midway,below left=3pt] {$b_1$};
  \draw[edge,arrow] (F)--(E) node[midway,below=5pt] {$a_2$};
  \draw[edge,arrow] (E)--(D) node[midway,below right=3pt] {$b_2$};
  \draw[edge,arrow] (C)--(D) node[midway,right=6pt] {$a_2$};
  \draw[edge,arrow] (B)--(C) node[midway,above right=3pt] {$b_2$};

  \draw[curve,curveblue] (.15,2.4)--(-2.4,-.15);
  \draw[curve,curveblue] (-.15,-2.4)--(2.4,.15);
  \draw[curve,curveorange] (.75,2.4)--(2.4,.75);
  \draw[curve,curveorange] (-.75,-2.4)--(-2.4,-.75);
  \node[curveblue,fill=black!2,inner sep=1.3pt] at (-1.14,1.39) {$\eta_0$};
  \node[curveblue,fill=black!2,inner sep=1.3pt] at (1.14,-1.39) {$\eta_1$};
  \node[curveorange] at (1.43,1.38) {$\zeta_1$};
  \node[curveorange] at (-1.43,-1.38) {$\zeta_2$};
  \node[black!65] at (-.53,-.63) {$P_+$};
  \node[black!65] at (-1.58,1.51) {$P_-$};
  \foreach \p in {A,B,C,D,E,F,G,H} \fill (\p) circle[radius=1.1pt];
  \node[anchor=south east] at (-1.08,2.46) {$z_0$};
  \fill (0,0) circle[radius=1.4pt];
  \node[anchor=north west] at (.07,-.04) {$z_1$};
  \draw[pathpurple,line width=.65pt,-{Stealth[length=4pt]}]
    (.43,0) arc[start angle=0,end angle=180,radius=.43];
  \node[pathpurple] at (0,.70) {$\tau$};
  \node[anchor=north,font=\small] at (0,-2.99)
    {(a) The fiber $S$.};

  \begin{scope}[shift={(7.2,0)}]
    \path[fill=curveblue!7] (0,0) circle[radius=2.2];
    \fill[white] (-1.1,0) circle[radius=.43];
    \fill[white] (1.1,0) circle[radius=.43];
    \draw[curve,curveorange] (0,0) circle[radius=2.2];
    \draw[curve,curveblue] (-1.1,0) circle[radius=.43];
    \draw[curve,curveblue] (1.1,0) circle[radius=.43];
    \node[curveblue] at (-1.1,-.69) {$\eta_0$};
    \node[curveblue] at (1.1,-.69) {$\eta_1$};
    \node[curveorange] at (1.70,1.77) {$\zeta$};
    \node[black!65] at (0,-1.48) {$P_+$};
    \fill (0,0) circle[radius=1.4pt];
    \node[anchor=north] at (0,-.16) {$z_1$};
    \draw[pathpurple,line width=.65pt,-{Stealth[length=4pt]}]
      (1.78,0) arc[start angle=0,end angle=180,radius=1.78];
    \node[pathpurple] at (0,1.49) {$\tau$};
    \node[anchor=north,font=\small] at (0,-2.99)
      {(b) The pair of pants $P_+$.};
  \end{scope}
  \coordinate (figSW) at (current bounding box.south west);
  \coordinate (figNE) at (current bounding box.north east);
  \path[use as bounding box]
    ([xshift=-5pt,yshift=-5pt]figSW) rectangle
    ([xshift=5pt,yshift=5pt]figNE);
\end{tikzpicture}%
}
\caption{The $\tau$-invariant pants decomposition of the fiber $S$
in the octagon model of~\cite[Figure~2]{AP}, with edges identified as labeled
and all vertices representing $z_0$. The central and corner regions
give $P_+$ and $P_-$. The arcs $\zeta_1,\zeta_2$ join under the edge
identifications to form the single curve $\zeta$. On the right,
$P_+$ is drawn as a planar pair of pants with the half turn exchanging
$\eta_0,\eta_1$.}
\label{fig:ap-pants}
\end{figure}

The curves give vertical $3$--manifolds $Y_0,Y_1$ corresponding
to $\zeta$ and $\eta_0\cup\eta_1$; cutting along them gives pants
bundles $Z_+,Z_-$ over $Q_2$. The faces $Y_0,Y_1$ are also bundles
over $Q_2$, with fibers $S^1$ and $S^1\sqcup S^1$, respectively;
$\tau$ exchanges the two components of the latter. These spaces
are aspherical, and the cuff inclusions and bundle homotopy sequences
give $\pi_1$-injective maps $Y_j\to Z_\pm$.

We check that the vertical faces give the prescribed Lagrangian
framings on $T_5\subset\Int Y_1$ and $T_6\subset\Int Y_0$.
The exchanged cuff collars give the product framing for $T_5$.
For $T_6$, choose an invariant collar of $\zeta$ with
$\tau(\theta,v)=(\theta+\tfrac12,v)$. With $s$ along the lifted
base path and $r$ normal to it, the torus neighborhood is
\[
(\theta,s,v,r)\sim(\theta+1,s,v,r)
\sim(\theta+\tfrac12,s+1,v,r).
\]
The longitude $\beta$ first traverses the base path, then
$\widetilde b_2$, halfway around $\zeta$, closing by the second
identification. The normal coordinates $v,r$ descend unchanged;
fixing them gives nearby Lagrangian tori for the product form
in~\cite{AP}, hence the prescribed framing.

The surgeries therefore reglue $Y_0,Y_1$ by maps fixed near their
boundaries, so Theorem~\ref{thm:whitehead} makes the surgered piece
$E_2$ aspherical. To apply Lemma~\ref{lem:boundary} at its outer
boundary, let $A=P_\pm\x\partial Q_2$ be a horizontal face of
$Z_\pm$. The bundle homotopy sequences, using
$\pi_1(\partial Q_2)\hookrightarrow\pi_1(Q_2)$, show that $A$
is $\pi_1$-injective in $Z_\pm$ and that each component of
$A\cap Y_j$ is $\pi_1$-injective in $Y_j$.

Suppose a path $\alpha$ in $A$ is homotopic in $Z_\pm$ to a path
$\gamma$ in $Y_j$, fixing endpoints. Project a homotopy from
$\gamma$ to $\alpha$ to $Q_2$ and lift it to $Y_j\to Q_2$,
starting with $\gamma$ and fixing both endpoints~\cite[Proposition~4.48]{Hatcher}.
The resulting path $\gamma'$ lies over $\partial Q_2$, hence in
a single component of $A\cap Y_j$, and is homotopic to $\alpha$
in $Z_\pm$.
Since $A$ is $\pi_1$-injective, the loop formed by these paths
is nullhomotopic in $A$, so they are homotopic within $A$, fixing
endpoints. This verifies condition $(\star)$ of Lemma~\ref{lem:boundary} in both pants bundles. The regluings preserve these hypotheses by
Remark~\ref{rmk:boundarycriterion}, so Lemma~\ref{lem:boundary}
gives $\pi_1$-injectivity of $S\x\partial Q_2$ in $E_2$.
Theorem~\ref{thm:whitehead} then makes
$M_1^1=E_1\cup_{S\x S^1}E_2$ aspherical, with the separating
boundary still $\pi_1$-injective.

For $M_m^1$, only the first product quadrant of $E_1$ changes.
Its fiber curves $a_1,b_1$ intersect once and the other surgery
has multiplicity $1$, so Lemma~\ref{lem:multiplicity} gives
asphericity and the same face injections and corner intersection.
Thus $E_1$ still has $\pi_1$-injective boundary, and gluing to
the unchanged $E_2$ proves asphericity for every $m\ge1$.

A homology calculation similar to the FPS case gives the integral
homology and intersection form of $S^2\x S^2$ for $M_m^1$~\cite{AP}.
\end{proof}

\section{Luttinger surgery and symplectic asphericity}\label{sec:nonpreservation}

We prove Theorem~\ref{thm:nonpreservation} through some examples we found easiest to demonstrate the failure of the symplectic asphericity under Luttinger surgeries.

\begin{proof}[Proof of Theorem~\ref{thm:nonpreservation}]
Fix $h\ge1$. Let $V\to S^2$ be Matsumoto's genus-$2$ Lefschetz
fibration with six nonseparating and two separating vanishing cycles.  
The untwisted fiber sum
\[
X=V\#_{\Sigma_2}(\Sigma_2\x\Sigma_h)
\]
gives a Lefschetz fibration $f\colon X\to\Sigma_h$ with a section.
We use $a_1,b_1,a_2,b_2$ for the standard $\pi_1$ generators of the fiber and
$c_j,d_j$, for $1\le j\le h$, for the base generators. Our $V$ is
$X_2$ of~\cite{BaykurKorkmaz}; when $h=1$, $c_1,d_1$ correspond
to $a,b$ there. The vanishing cycles give in $\pi_1(V)$ the relations
\[
a_1=a_2^{-1},\qquad b_1=b_2^{-1},\qquad [a_2,b_2]=1,
\]
so $\pi_1(V)\cong\Z^2$, with ordered basis $a_2, b_2$, which we use in the identification 
\[\pi_1(X)\cong\pi_1(V)\x\pi_1(\Sigma_h)
           \cong\Z^2\x\pi_1(\Sigma_h).\]
           
The two coordinate projections of the $\Z^2$ factor descend to
$H_1(X;\Z)$ and give classes
$p,q\in H^1(X;\Z)=\operatorname{Hom}(H_1(X;\Z),\Z)$,
dual to $a_2,b_2$, respectively, and zero on the base generators.
Put $\eta=p\cup q$. The relations above give
$p(a_1)=q(b_1)=-1$ and $p(a_2)=q(b_2)=1$, with all the other
pairings $p(b_i),q(a_i)$ zero. Thus
\[
\langle\eta,[\Sigma_2]\rangle
=\sum_{i=1}^{2}\bigl(p(a_i)q(b_i)-p(b_i)q(a_i)\bigr)
=1+1=2.
\]
At either of the two reducible fibers,
the torus components carry the 
generators $a_1,b_1$ and $a_2,b_2$, respectively. On each component $\eta$ evaluates to~$1$. Since every irreducible fiber represents the regular fiber class, we conclude that $\eta$ is positive on every irreducible fiber component.

We can now use $\eta$ to construct a Gompf--Thurston symplectic form $\omega$ with
\[
[\omega]=\eta+\kappa f^*[\sigma],
\]
where $\sigma$ is an area form on the base $\Sigma_h$ and $\kappa$ is
sufficiently large~\cite[Theorem~2.7]{Gompf}. We choose $\omega$ to agree with a product form
near $T=a_2'\x d_1'\subset\Sigma_2\x\Sigma_h^1$, where the primes
denote parallel copies of the corresponding curves. This is possible
because $\eta$ restricts to the fiber area class in this trivial
region; its local representative can be kept fixed when patching
by exact forms, as in the proof of~\cite[Theorem~3.1]{Gompf}.
Thus $T$ is Lagrangian, with its product framing. Since $p, q$ are $1$--dimensional classes, their pullbacks to any $2$--sphere vanish, and $f^*[\sigma]$ also vanishes on spheres since $\Sigma_h$ is aspherical. Hence the form $\omega$ we constructed is symplectically aspherical.

Put $E=X\setminus\Int \, \nu T$. The vanishing cycle relations above
still hold in $E$.  Using the meridian calculation in Kirk's product model~\cite[Theorem~9.2]{Kirk},
with the base and fiber factors in that order, we can choose a meridian $\mu=[c_1^{-1},b_2]$.
The product torus $b_1\x c_1$ misses $T$, so $[b_1,c_1]=1$ in
$\pi_1(E)$. Together with $b_2=b_1^{-1}$, this gives $\mu=1$.
Regluing $\nu T \cong T^2\x D^2$ to $E$ by the original gluing map kills only the normal closure of
$\mu$, so $\pi_1(E) \cong \pi_1(X)$.
The framed $a_2'$ longitude $\lambda$ therefore represents $a_2$
in $\pi_1(E)\cong\Z^2\x\pi_1(\Sigma_h)$.
Luttinger surgery with slope $\mu+\lambda$ kills this generator and
gives a symplectic $X_T$ with
$\pi_1(X_T)\cong\Z\x\pi_1(\Sigma_h)$.

We claim that $X_T$ admits no symplectically aspherical form, by following arguments as in \cite{KRT}.  A classifying space $K$ for $\pi_1(X_T)$ can be obtained from $X_T$
by attaching $3$--cells to kill $\pi_2$, followed by higher dimensional cell attachments to kill all $\pi_k$, for $k \geq 3$. Let $\alpha\in H^2(X_T;\mathbb R)$ vanish on spherical classes.
A cellular $2$--cocycle representing $\alpha$ remains a cocycle after
attaching a $3$--cell precisely when its evaluation on the attaching
sphere is zero. This holds by our assumption, so $\alpha$ here extends over
all the $3$--cells. The higher cell attachments do not affect the
second cohomology, so we obtain an extension
$\widetilde\alpha\in H^2(K;\mathbb R)$ of $\alpha$.
The Eilenberg--Mac Lane space $K$ is homotopy equivalent
to $S^1\x\Sigma_h$, which is $3$--dimensional,
so $H^4(K;\mathbb R)=0$.\footnote{The argument breaks down for $X$ at this point: the classifying space $T^2\x\Sigma_h$ for $\pi_1(X)$ has nontrivial fourth cohomology.}
Thus $\widetilde\alpha^2=0$,
and restricting this equality to $X_T$ gives $\alpha^2=0$,
since restriction commutes with cup products.
Since a symplectic class on a closed $4$--manifold has nonzero square, this proves our claim that $X_T$ admits no symplectically aspherical form.
Varying $h$ in this example, we get an infinite family of examples, as $b_1(X)=2h+2$ and $b_1(X_T)=2h+1$. 
\end{proof}

\begin{rmk}[Further examples]\label{rmk:nonpreservation}
In fact, a careful look at the literature reveals that counterexamples to the conjecture that Luttinger surgery preserves symplectic asphericity are already implicit in several
constructions of exotic $4$--manifolds. For example, $X_1'$ in~\cite[Theorem~24]{BaykurKorkmaz}
is the extension over the torus of a genus-$2$ fibration with seven
critical points. The analogous cup product class has values
$(2,1)$, $(2,1)$ and $(1,2)$ on the reducible fiber components,
and $b_1=b_2^{-1}$ still holds, so the same form and meridian
arguments apply. The first of the four Luttinger surgeries gives
fundamental group $\Z^3$, already destroying symplectic asphericity on the way to an exotic copy of $\CP \# 4\CPb$.
More generally, any sequence of Luttinger surgeries from a
symplectically aspherical manifold to one with finite fundamental
group must lose symplectic asphericity at some step. Further
examples can be found in~\cite{BHArbitrary,BH}.
\end{rmk}

\section{Final remarks}\label{sec:addenda}

We conclude with a few variations that follow from the same local
models and boundary calculations.

\begin{rmk}\label{rmk:coefficients}
Keep the tori, framings, and chosen longitudes
in~\eqref{eq:fpslink}, respectively in~\eqref{eq:aplink}.
For any integers $k_i$, torus surgeries with slopes
$\mu_i+k_i\lambda_i$ give aspherical manifolds. The separating
$S^1\x\Sigma_2$ remains $\pi_1$-injective, and
\[
H_1=\oplus_{i=1}^{\ell}\Z_{|k_i|},\qquad
\ell=8\text{ for FPS},\quad \ell=6\text{ for AP}.
\]
Indeed, Lemma~\ref{lem:product} and the AP pants argument allow
arbitrary surgery coefficients, with the same boundary injections
and intersections. In both exteriors the meridians vanish in first
homology and the chosen longitudes form a basis, so the surgeries impose
precisely $k_i\lambda_i=0$. In particular, all choices $k_i=\pm1$ give
aspherical homology $S^2\x S^2$'s.

Applying Lemma~\ref{lem:multiplicity} in the first AP quadrant,
and changing the coefficient on the fifth torus, also gives
\[
M_m^p\text{ aspherical},\qquad H_1(M_m^p;\Z)=\Z_p
\quad(m\ge1,\ p\ge0).
\]
These are the manifolds in~\cite{AP}; the varying multiplicity is
confined to the first surgery. The second torus, which lies in the
same quadrant, is still surgered with multiplicity $1$.
\end{rmk}

\begin{rmk}\label{rmk:highergenus}
The FPS construction extends to $\Sigma_2\x\Sigma_h$, $h\ge2$
(and more generally the same local arguments apply to
$\Sigma_g\x\Sigma_h$, $g,h\ge2$), in a straightforward fashion.
Keep the eight surgeries on the first two handles of the second
factor, and for each $j=3,\ldots,h$ add the two pairs
\[
(a_1'\x c_j',\,c_j'),\qquad
(a_1''\x d_j',\,d_j'),
\]
with slopes $\mu+\lambda$. Denote the resulting manifold by $X_h$.
Cut $\Sigma_h$ into punctured tori $Q_j$ and a planar surface $R$.
Over each $Q_j$, Lemma~\ref{lem:product} and
Remark~\ref{rmk:otherfaces} assemble the two quadrants into an aspherical piece with
$\pi_1$-injective boundary $\Sigma_2\x\partial Q_j$.
Gluing these pieces to $\Sigma_2\x R$ proves asphericity. Taking $h=k+2$, $k\ge0$, gives aspherical fake
$\#_{2k+1}(S^2\x S^2)$'s.
\end{rmk}

We next show that asphericity is not specific to the
surgery slopes used in~\cite{FPS}: on the same tori, varying
both the coefficients and the longitude directions still gives
aspherical manifolds.

\begin{addendum}\label{rmk:longitudes}
On the fixed FPS tori, all slopes
$\mu_i+r_i\alpha_i+s_i\beta_i$, where $r_i,s_i\in\Z$ and
$\alpha_i,\beta_i$ are the product longitudes, give aspherical
manifolds. The separating $S^1\x\Sigma_2$ remains
$\pi_1$-injective.
\end{addendum}

\begin{proof}
Cut the base of a quadrant into an annulus $S^1\x I$ and a band,
and write $t$ for the loop around the circle factor.
The surgeries are regluings of $P\x S^1$ at successive levels
along $I$. Composing them in the order of these levels gives
$\Phi\in\operatorname{Aut}(U)$, where
$U=F\x\langle t\rangle$, $F=\pi_1(P)$ and $u=[\partial P]$.
Choose a basepoint on $\partial P\x S^1$. The regluings are
the identity near this boundary, so $\Phi$ fixes the based loops
$u,t$. Restoring the band gives asphericity by Whitehead's theorem
and the presentation
\begin{equation}\label{eq:mixedHNN}
V=\langle U,z\mid zfz^{-1}=\Phi(f),\ f\in F\rangle.
\end{equation}
With the based paths in the proof of Lemma~\ref{lem:multiplicity},
the two faces have images
\[
K=\langle F,\delta\rangle,\qquad L=\langle u,t,z\rangle,
\qquad\delta=z^{-1}tzt^{-1}.
\]
Since $\Phi$ fixes $\langle u,t\rangle$ pointwise, both $F$ and
$\Phi(F)$ meet this subgroup in $\langle u\rangle$, and neither
contains a nonzero power of $t$.

The relations in~\eqref{eq:mixedHNN} show that $\delta$ centralizes
$F$ and $u$ commutes with $t,z$.
For $n\ne0$, the word $\delta^n$ has no cancellable pair of
$z$ and $z^{-1}$ factors: the intervening nonzero powers of $t$ lie
in neither $F$ nor $\Phi(F)$. Thus the normal-form theorem
for~\eqref{eq:mixedHNN}~\cite[Theorem~11 and Corollaries~1--2]{Serre}
gives $K\cong F\x\Z$.
For words in $L$, such a cancellation requires the intervening
element of $\langle u,t\rangle$ to lie in $\langle u\rangle$.
These are exactly the cancellations allowed in
$\langle u\rangle\x\F(t,z)$, so this group also injects, with
image $L$.

Although $\Phi$ need not preserve $F$, it induces an automorphism
$\bar\Phi$ of $U/\langle t\rangle\cong F$.
Quotienting~\eqref{eq:mixedHNN} by $t=1$ gives
$F\rtimes_{\bar\Phi}\langle z\rangle$, injectively on $F$.
The quotient kills $\delta$ and sends $L$ to $\langle u,z\rangle$,
which meets $F$ in $\langle u\rangle$.
Thus an element $f\delta^n$ of $K$ which also lies in $L$ must
have $f\in\langle u\rangle$. Conversely, $u,\delta$ both lie in
$L$, so $K\cap L=\langle u,\delta\rangle$.
The FPS gluings now apply as before.
\end{proof}

The same boundary injections also give the following geometric consequence.

\begin{addendum}\label{add:curvature}
Every manifold in Theorem~\ref{thm:families} or Addendum~\ref{rmk:longitudes} has a fundamental group containing a
subgroup isomorphic to $\Z\x\pi_1(\Sigma_2)$.
In particular, these groups are infinite, and none of these manifolds
admits a Riemannian metric of strictly negative sectional curvature.
\end{addendum}

\begin{proof}
In all these constructions, the separating $S^1\x\Sigma_2$ remains
$\pi_1$-injective, giving the subgroup in the statement and hence the conclusion that $\pi_1$ is infinite,
without using asphericity. This subgroup contains $\Z^2$.
However, by Preissmann's theorem~\cite[Theorem~9]{Preissmann}, every nontrivial
abelian subgroup of the fundamental group of a closed manifold with
strictly negative sectional curvature is infinite cyclic.
\end{proof}

\begin{rmk}[Failure of boundary injectivity]\label{rmk:pinwheels}
The pinwheel constructions in~\cite{FS} give a neat way to present
various constructions of fake and exotic copies of rational surfaces,
and make it visible where our asphericity arguments fail for truly
exotic copies---those with trivial $\pi_1$, and therefore homeomorphic
to the rational surface.
For instance, in the blown-up block used
for the $\CP\#2\CPb$ construction,
an additional $2$-handle is attached along the boundary circle of
one punctured torus, adding the relation $[x,y]=1$;
see~\cite[Sections~6 and~7.2]{FS}.
This commutator is nontrivial in the corresponding product face
$\Sigma_1^1\x S^1$, but is trivial in the block, and remains
so after the subsequent torus surgeries. Thus this face is not
$\pi_1$-injective, and Whitehead's theorem does not apply to this
decomposition.
Similarly, one can spot this failure using~\cite[Theorem~10.3 and
Lemma~10.6(d)]{Kirk}: in the notation there, the nontrivial boundary
commutator $[b_1,a_2]$ maps to $[\tau,y]=1$ in the genus $2$
surface exterior, so this boundary inclusion is not $\pi_1$-injective.

Similarly, the subgroup obstruction of Addendum~\ref{add:curvature}
requires the separating boundary to be $\pi_1$-injective in
both adjoining pieces. This hypothesis fails, e.g., for the exotic
signature zero $4$--manifolds constructed in~\cite{BH}, which admit
horizontal and vertical decompositions along $S^1\x\Sigma_2$
and $S^1\x\Sigma_g$, respectively; see~\cite[Figure~13]{BH}.
In each case, the inclusion into the piece containing the Lefschetz
kernel is not $\pi_1$-injective: the surgery relations and Lefschetz
thimbles kill some nontrivial boundary loops. The subgroup obstruction
does not work for such decompositions.
\end{rmk}

\begin{rmk}[Other examples] \label{rmk:further}
There are other constructions of fake $S^2\x S^2$'s, obtained through torus surgeries in similar models, which we have not studied here. These include those in~\cite{Akhmedov,LP}, where no claim of exotica is made, and~\cite{Wuebben}, which indeed claims simple connectivity and exoticness for a variant of the construction in~\cite{LP}. I imagine that arguments similar to those we employed in this paper can also be applied to many of these examples. A few people have also pursued constructions using surgeries along Lagrangian Klein bottles~\cite{Nemirovski} in surface products or surface bundle models, though I know of no published candidates obtained this way. I expect that the arguments of Section~\ref{sec:gluing} can be adapted to such constructions as well.
\end{rmk}

\bigskip
\bibliographystyle{amsplain}
\bibliography{refs}

\end{document}